\def\timestamp{%
Time-stamp: <dim-and-maps.tex: Wednesday 30-09-2009 at 23:48:57 (cest)>}
\def\stripname Time-stamp: <#1 #2>{#2}
\edef\filedate{\expandafter\stripname\timestamp}
\newcommand\calB{\mathcal{B}}
\newcommand\calC{\mathcal{C}}
\newcommand\calO{\mathcal{O}}
\newcommand\calQ{\mathcal{Q}}
\newcommand\calS{\mathcal{S}}
\newcommand\calT{\mathcal{T}}
\DeclareMathSymbol\BB0{AMSb}{`B}
\DeclareMathSymbol\N 0{AMSb}{`N}
\DeclareMathSymbol\R 0{AMSb}{`R}
\DeclareMathSymbol\Q 0{AMSb}{`Q}
\newcommand\Nstar{\N^*}
\let\emptyset \undefined
\let\ge       \undefined
\let\le       \undefined
\DeclareMathSymbol\restr\mathbin{AMSa}{"16}
\DeclareMathSymbol\le   \mathrel{AMSa}{"36}    
\DeclareMathSymbol\ge   \mathrel{AMSa}{"3E}    
\DeclareMathSymbol\emptyset\mathord{AMSb}{"3F}
\newcommand\cee{\mathfrak{c}}
\newcommand\CH{\mathsf{CH}}
\newcommand\cl[1]{\operatorname{cl}#1}
\newcommand\Int[1]{\operatorname{int}#1}
\newcommand\diam {\operatorname{diam}}
\newcommand\Fr   {\operatorname{Fr}}
\newcommand\ind  {\operatorname{ind}}
\newcommand\Ind  {\operatorname{Ind}}
\newcommand\ord  {\operatorname{ord}}
\newcommand\RO   {\operatorname{RO}}
\newcommand\preim{^{-1}}
\let\epsilon\varepsilon
\newtheorem{theorem}{Theorem}[section]
\newtheorem{corollary}[theorem]{Corollary}
\newtheorem{lemma}[theorem]{Lemma}
\newtheorem{proposition}[theorem]{Proposition}
\numberwithin{equation}{section}
\newtheorem*{thm}{Theorem}
\theoremstyle{definition}
\newtheorem*{question}{Question}
\theoremstyle{remark}
\newtheorem{remark}[theorem]{Remark}
\begin{document}

\title[Dimension and maps]%
      {Covering dimension and finite-to-one maps}

\dedicatory{To Ken Kunen on the occasion of his retirement from teaching}

\author{Klaas Pieter Hart}

\address{Faculty EEMCS\\TU Delft\\
         Postbus 5031\\2600~GA {} Delft\\the Netherlands}
\address{Department of Mathematics and Statistics\\
         Miami University\\
         Oxford\\
         OH~45056}
\email{k.p.hart@tudelft.nl}
\urladdr{http://fa.its.tudelft.nl/\~{}hart}

\author{Jan van Mill}
\address{Faculty of Sciences\\
         Division of Mathematics\\
         Vrije Universiteit\\
         De Boelelaan 1081\textsuperscript{a}\\
         1081 HV Amsterdam\\
         The Netherlands}
\email{j.van.mill@cs.vu.nl}

\date{\filedate}

\begin{abstract}
Hurewicz' characterized the dimension of separable metrizable spaces
by means of finite-to-one maps. 
We investigate whether this characterization also holds in the class
of compact $F$-spaces of weight~$\cee$.
Our main result is that, assuming the Continuum Hypothesis, an $n$-dimensional
compact $F$-space of weight~$\cee$ is the continuous image of a zero-dimensional
compact Hausdorff space by an at most $2^n$-to-$1$ map.
\end{abstract}

\keywords{covering dimension, inductive dimension, finite-to-one maps,
          F-space}

\subjclass[2000]{Primary: 54F45.
                 Secondary: 54C10, 54G05}

\maketitle

\section*{Introduction}

The starting point for this note is a theorem of Hurewicz 
from~\cite{Hurewicz1926},  which characterizes the dimension of separable
metrizable spaces in terms of maps.

\begin{thm}
Let $X$ be a separable and metrizable space and let $n$ be a natural number.
Then $\dim X\le n$ if{}f there are a zero-dimensional separable metrizable
space~$Y$ and a continuous and closed surjection $f:Y\to X$ such that 
$\bigl|f\preim(x)\bigr|\le n+1$ for all~$x$.
\end{thm}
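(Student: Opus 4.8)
My plan is to prove both implications, treating the easy ``sufficiency'' direction directly and reducing the harder ``necessity'' direction to the compact case. Throughout I use that for a normal space $\dim X\le n$ is equivalent to: every finite open cover admits a finite closed (equivalently open) refinement of order $\le n+1$, where a family has order $\le n+1$ when no point lies in more than $n+1$ of its members.

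For the sufficiency direction, suppose $f:Y\to X$ is a closed surjection with $Y$ zero-dimensional and all fibres of size $\le n+1$. Given a finite open cover $\mathcal V$ of $X$, the family $\{f\preim(V):V\in\mathcal V\}$ is a finite open cover of $Y$, so since $\dim Y=0$ it has a finite refinement $\{W_1,\dots,W_t\}$ into pairwise disjoint clopen sets. Each $W_i$ is closed and $f$ is closed, hence every $f(W_i)$ is closed; as the $W_i$ partition $Y$ and $f$ is onto, $\{f(W_i)\}$ is a finite closed cover of $X$ refining $\mathcal V$. For a fixed $x$ one has $x\in f(W_i)$ exactly when $W_i$ meets $f\preim(x)$, and since the $W_i$ are disjoint while $|f\preim(x)|\le n+1$, the point $x$ lies in at most $n+1$ of the sets $f(W_i)$. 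Thus this cover has order $\le n+1$, and $\dim X\le n$ follows.

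For the necessity direction I would first reduce to $X$ compact: every separable metrizable space with $\dim X\le n$ has a metrizable compactification $\widetilde X$ with $\dim\widetilde X\le n$, and a closed at most $(n+1)$-to-one surjection $\widetilde f:\widetilde Y\to\widetilde X$ from a zero-dimensional compactum restricts, via $\widetilde f\restr\widetilde f\preim(X):\widetilde f\preim(X)\to X$, to a map with all the required properties. So assume $X$ is compact metric with $\dim X\le n$. Choose finite open covers $\mathcal U_k$ of order $\le n+1$ with mesh tending to $0$ and $\mathcal U_{k+1}$ suitably inscribed in $\mathcal U_k$, pass to closed shrinkings $\mathcal F_k$ (still of order $\le n+1$), and fix refinement maps assigning each member a parent containing it. Let $Y$ consist of the threads $(F_k)_k$ with $F_k\in\mathcal F_k$, $F_{k+1}\subseteq F_k$ and $\bigcap_k F_k\ne\emptyset$; topologised as a subspace of $\prod_k\mathcal F_k$ this is a zero-dimensional compact metric space, and $f\bigl((F_k)_k\bigr)$ is the unique point of $\bigcap_k F_k$, a singleton because the mesh tends to $0$. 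Continuity comes from mesh control, and surjectivity from a K\"onig's lemma argument on the finitely branching tree of nested cover elements through a given point.

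The heart of the matter, and the step I expect to be the main obstacle, is the fibre bound. A point $x$ lies in at most $n+1$ members of each $\mathcal F_k$, so the tree $T_x$ of finite nested sequences $(F_1,\dots,F_m)$ with $x\in\bigcap_i F_i$ has at most $n+1$ nodes on every level. Since the refinement maps give each node a unique parent, two distinct branches of $T_x$ that separate at some level occupy distinct nodes from that level on; hence any finite set of branches is eventually pairwise separated and so has size at most the level width $\le n+1$. As $f\preim(x)$ is precisely the set of branches of $T_x$, we obtain $|f\preim(x)|\le n+1$. The delicate points are to arrange the covers so that the closed shrinkings still refine one another with the inscribed nesting needed for surjectivity, while simultaneously keeping the order at $n+1$ so that the branch count cannot exceed $n+1$; it is this bookkeeping, rather than any single conceptual difficulty, where the real work lies.
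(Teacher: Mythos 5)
Your sufficiency argument and your reduction of the hard direction to the compact case are both sound (the paper itself only cites the dimension-raising theorem for the former and proves the latter only for compact $X$, so on those points you supply more than it does). The genuine gap is in the fibre bound, which you yourself identify as the heart of the matter. With $Y$ defined as the set of threads $(F_k)_k$ satisfying the mere inclusion condition $F_{k+1}\subseteq F_k$, the claim that the tree $T_x$ of nested sequences $(F_1,\dots,F_m)$ with $x\in\bigcap_iF_i$ has at most $n+1$ nodes on every level is false: the bound $\bigl|\{F\in\mathcal{F}_m:x\in F\}\bigr|\le n+1$ limits the possible \emph{last entries} of such a sequence, not the number of sequences. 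If at each level two members $F_k,F_k'$ of $\mathcal{F}_k$ contain $x$ and both members of the next level through $x$ lie in $F_k\cap F_k'$ --- a configuration that closed shrinkings of covers of $[0,1]$ of order $2$ can easily produce --- then there are $2^m$ nested sequences of length $m$ and the fibre $f\preim(x)$ is uncountable. Your appeal to ``the refinement maps give each node a unique parent'' is the right instinct, but those maps play no role in your definition of $Y$, so the argument does not apply to the object you actually built.

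The repair is to build $Y$ out of the parent relation rather than out of inclusion: require that $F_k$ be the \emph{designated} parent of $F_{k+1}$ for every $k$. Then a chain is determined by its last entry (walk back along the unique parents), so level $m$ of $T_x$ injects into $\{F\in\mathcal{F}_m:x\in F\}$ and genuinely has at most $n+1$ nodes; two distinct branches remain distinct from the level where they first separate, so there are at most $n+1$ of them; and surjectivity survives, because for every $m$ some $F\in\mathcal{F}_m$ contains $x$ and every set in its ancestor chain contains $x$ as well, so K\"onig's lemma still applies to this finitely branching tree. This is precisely the difficulty the paper's tiling device is designed to eliminate: its tiles are pairwise disjoint regular open sets, so a tile of $\mathcal{T}_{k+1}$ lies in a unique tile of $\mathcal{T}_k$ and no choice of parent is needed, while the bound $|\mathcal{T}_{k,x}|\le n+1$ is extracted not from the order of the covers but from a base satisfying $\bigcap_{i\in F}\Fr B_i=\emptyset$ for $|F|=n+1$, via its two lemmas on refining tilings. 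Once your definition of $Y$ is amended as above, your proof is correct and is essentially a closed-cover variant of the paper's construction, with the added merit of handling the non-compact case and the converse explicitly.
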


Our aim is to generalize this result to a wider class of spaces.

One half of Hurewicz's theorem is a special case of the theorem on dimension
raising maps.
This special case can be generalized to the class of normal spaces
(the hint to Problem~1.7.F (c) in \cite{MR1363947} provides a proof):

\begin{thm}
Let $f:Y\to X$ be a closed continuous surjection between normal spaces
and $n$ a natural number.
If $Y$ is strongly zero-dimensional and if $f$ is such that 
$\bigl|f\preim(x)\bigr|\le n+1$ for all~$x$ then $\Ind X\le n$.
\end{thm}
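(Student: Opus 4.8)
The plan is to prove this by induction on $n$, using the standard technique of controlling the dimension via separators. The key tool is the characterization of $\Ind X \le n$ by separators: $\Ind X \le n$ iff for every pair of disjoint closed sets $A, B$ there is a closed separator $C$ (a closed set whose removal puts $A$ and $B$ in distinct clopen pieces of the complement) with $\Ind C \le n-1$. Let me think about how the finite-to-one hypothesis feeds into building such a separator.

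Base case $n=0$: if $|f^{-1}(x)| \le 1$ for all $x$, then $f$ is a continuous closed bijection, hence a homeomorphism, and $\Ind X = \Ind Y = 0$ (strong zero-dimensionality of normal $Y$ gives $\Ind Y = 0$).

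For the inductive step, suppose the result holds for $n-1$. Given disjoint closed $A, B \subseteq X$, I want a separator $C$ with $\Ind C \le n-1$. The idea is to pull back to $Y$: since $Y$ is strongly zero-dimensional and normal, the disjoint closed sets $f^{-1}(A)$ and $f^{-1}(B)$ can be separated by a clopen partition $Y = U \cup V$ with $f^{-1}(A) \subseteq U$, $f^{-1}(B) \subseteq V$. The projected images $f(U)$ and $f(V)$ are closed (since $f$ is closed), cover $X$, and contain $A$, $B$ respectively. Their intersection $C = f(U) \cap f(V)$ is a closed set that separates $A$ from $B$ in $X$. The crucial point is that over $C$ the map has smaller fibers: if $x \in C$ then $f^{-1}(x)$ meets both the clopen set $U$ and its complement $V$, so $f^{-1}(x)$ splits nontrivially between $U$ and $V$. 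Restricting $f$ to $U \cap f^{-1}(C) \to C$ (or the analogous piece), each fiber has size at most $n$, because at least one point of the original fiber of size $\le n+1$ lies in the other piece $V$. Then $C$ is normal (closed in normal $X$), $U \cap f^{-1}(C)$ is strongly zero-dimensional (clopen in strongly zero-dimensional $Y$), and the restricted map is a closed continuous surjection onto $C$ with fibers of size $\le n$; by the induction hypothesis $\Ind C \le n-1$.

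The main obstacle, and the point requiring care, is verifying that the restricted map $f \restr (U \cap f^{-1}(C)) : U \cap f^{-1}(C) \to C$ is \emph{surjective} onto $C$ with the claimed fiber bound, and that $C$ genuinely separates $A$ from $B$. Surjectivity onto $C$ holds because $C = f(U) \cap f(V)$, so every $x \in C$ has a preimage in $U$. The fiber bound $\le n$ follows since $f^{-1}(x)$ has a point in $V$, hence $|f^{-1}(x) \cap U| \le |f^{-1}(x)| - 1 \le n$. For the separation claim, one checks that $X \setminus C = (f(U) \setminus f(V)) \cup (f(V) \setminus f(U))$, and these two pieces are disjoint open sets containing $A$ and $B$ respectively, with $C$ as their common boundary, so $C$ is a separator. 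A secondary subtlety is handling the closedness and normality bookkeeping to ensure the induction hypothesis applies cleanly at each stage; these are routine once the fiber-splitting idea above is in place.
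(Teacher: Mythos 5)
Your argument is correct and is essentially the proof the paper has in mind: the paper gives no proof of its own but points to the hint for Problem~1.7.F\,(c) in Engelking's book, which is exactly this induction on $n$ via partitions --- pull the disjoint closed sets back to $Y$, split $Y$ by a clopen set $U$, and use $C=f(U)\cap f(Y\setminus U)$ as a partition over which the fibers drop to size at most $n$. All the supporting details you flag (closedness and surjectivity of the restricted map, normality of $C$, strong zero-dimensionality of the clopen-intersect-closed piece) check out.
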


Hurewicz' proof of the other half was based on the interplay between the large 
inductive dimension and the covering dimension, using finite collections of
closed sets of order~$n+1$ to construct the preimage.
Also Kuratowski's quantitative proof in~\cite{Kuratowski1932} used covering
dimension to show that in the case where $X$~has no isolated points
the set of surjections with at most $n+1$-point fibers is residual in the 
space of all surjections from the Cantor set onto~$X$.

This all suggests that we should look for classes of compact Hausdorff
spaces where the covering dimension and the large inductive dimension
coincide.
We shall show that the compact $F$-spaces of weight~$\cee$ form such
a class, assuming the Continuum Hypothesis.
Under that assumption these spaces are one step up from compact metrizable
spaces: the weight is $\aleph_1$ and the $F$-space property lets one do
countably many things at a time, which is quite helpful in inductions
and recursions of length~$\omega_1$.

In Section~\ref{sec.Hurewicz} we give a variation of Hurewicz' proof of the 
second half his theorem; we do this with an eye to the proof of our main
result and to make Hurewicz' argument better known.
Next we establish, in Section~\ref{sec.dim=ind=Ind}, that the three major 
dimension functions coincide on the class of compact $F$-spaces of 
weight~$\cee$.
In Sections~\ref{sec.special.bases} and~\ref{sec.fin-to-one} we show that
every $n$-dimensional compact $F$-space is an at~most $2^n$-to-$1$ continuous
image of a zero-dimensional compact space.
This leaves open an obvious question:

\begin{question}[$\CH$]
Is every $n$-dimensional compact $F$-space of weight~$\cee$ a continuous image
of some zero-dimensional compact space by a map whose fibers have size at 
most~$n+1$?
\end{question}

\subsection*{Some preliminaries}

We follow Engelking's book, \cite{MR1363947}, regarding dimension theory
except that by the order of a family~$\calT$ at a point~$x$ we simply
mean the cardinality of $\{T\in\calT:x\in T\}$.
Thus a (normal) space has covering dimension at most~$n$ if{}f every finite
open cover has an open refinement of order at most $n+1$.

An \emph{$F$-space} is one in which separated $F_\sigma$-sets have 
disjoint closures. 
Whenever $X$ is a $\sigma$-compact, locally compact space that is not compact
then the remainder $\beta X\setminus X$ in its \v{C}ech-Stone compactification
is an $F$-space.
Gillman and Jerison's book, \cite{MR0407579}, is still a good source of basic
information on $F$-spaces.

A set is \emph{regular open} if it is the interior of its closure.
The family $\RO(X)$ of regular open sets in the space~$X$ forms a Boolean
algebra under the following operations:
$U\vee V=\Int\cl(U\cup V)$, $U\wedge V=U\cap V$ and $U'=X\setminus\cl U$.
In the proof of our main result we obtain our zero-dimensional preimage
as the Stone space of a subalgebra of~$\RO(X)$; we refer to 
Koppelberg's book, \cite{MR991565}, for a comprehensive treatment
of this construction.

\section{Making a zero-dimensional preimage}
\label{sec.Hurewicz}

As announced we preent in this section a variation of Hurewicz' construction
of a zero-dimensional preimage of an $n$-dimensional compact metrizable
space by a map whose fibers have cardinality at most~$n+1$.

The key notion is that of a \emph{tiling} of a space, which we define to be 
a finite pairwise disjoint family of regular open sets whose closures form 
a cover of the space.
Given a tiling~$\calT$ and a point~$x$ we put 
$\calT_x=\{T\in\calT:x\in\cl T\}$.

In the proof we will construct ever finer tilings of the space; the following
lemma will help us keep the cardinalities of the families~$\calT_x$ under 
control.

\begin{lemma}\label{lemma.1}
Let $\{B_i:i<k\}$ be a family of regular open sets in a space~$X$
and let $T$ be a regular open set that is covered by this family.
For each~$i$ put $C_i=T\cap\bigl( B_i\setminus\bigcup_{j<i}\cl B_j\bigr)$ 
and for $x\in\cl T$ put $F_x=\{i:x\in \cl C_i\}$.
Then $x\in\Fr B_i$ whenever $i\in F_x$ is not maximal.
\end{lemma}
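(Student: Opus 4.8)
The plan is to unwind the definitions and track two facts about a fixed point $x\in\cl T$ and an index $i\in F_x$ that is not maximal in $F_x$: that $x$ lies in $\cl B_i$ and that $x$ lies outside $B_i$. Since each $B_i$ is regular open, hence open, these two facts together give exactly $x\in\Fr B_i=\cl B_i\setminus B_i$.

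First I would get $x\in\cl B_i$ for free: the membership $i\in F_x$ means $x\in\cl C_i$, and since $C_i\subseteq B_i$ by construction we have $\cl C_i\subseteq\cl B_i$, whence $x\in\cl B_i$. This step uses nothing about maximality.

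The other direction is where non-maximality enters. Because $i$ is not the largest element of $F_x$, I can pick some $j\in F_x$ with $j>i$, so that $x\in\cl C_j$. Now I would exploit the asymmetry in the definition of the $C$'s: since $i<j$, the set $\cl B_i$ is one of the sets removed in forming $C_j=T\cap\bigl(B_j\setminus\bigcup_{l<j}\cl B_l\bigr)$, so $C_j\cap\cl B_i=\emptyset$, that is $C_j\subseteq X\setminus\cl B_i$. Suppose toward a contradiction that $x\in B_i$. Then $B_i$ is an open neighbourhood of $x$, and because $B_i\subseteq\cl B_i$ this neighbourhood is disjoint from $X\setminus\cl B_i$ and hence from $C_j$; this contradicts $x\in\cl C_j$. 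Therefore $x\notin B_i$, and combined with the first step we conclude $x\in\Fr B_i$.

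I do not expect any serious obstacle here: everything follows from the definitions together with the two elementary observations that $B_i$ is open and that $C_j$ misses $\cl B_i$. The only point that requires a little care is to remember that, because the $B_i$ are open, the frontier $\Fr B_i$ is simply $\cl B_i\setminus B_i$, so that establishing the two membership facts above really does suffice.
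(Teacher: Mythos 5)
Your proof is correct and follows essentially the same route as the paper's: both arguments combine $x\in\cl C_i\subseteq\cl B_i$ with the observation that $C_j$ (for a larger index $j\in F_x$) misses $\cl B_i$, so the open set $B_i$ cannot contain $x$ without separating it from $C_j$. The only difference is notational (you take the non-maximal index to be $i$ and pick $j>i$, whereas the paper writes $j<i$), and your explicit remark that $\Fr B_i=\cl B_i\setminus B_i$ because $B_i$ is open is a point the paper leaves implicit.
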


\begin{proof}
This is clear: if $j<i$ then $B_j\cap \cl C_i=\emptyset$.
Furthermore, if $j<i$ in $F_x$ 
then $x\in\cl C_j\cap\cl C_i\subseteq \cl B_j\cap\cl C_i$;
so $x\in\cl B_j\setminus B_j=\Fr B_j$.
\end{proof}

The previous lemma implies that $\{i:x\in\cl C_i\}$ has at most one element
more than $\{i:x\in\Fr B_i\}$.
This behaviour persists when we refine tilings.

\begin{lemma}\label{lemma.2}
Let $\calT$ be a tiling of $X$ that is a subset of the Boolean algebra 
generated by a finite family~$\calB$ of regular open sets and assume that
for every~$x$ the family~$\{B\in\calB:x\in\Fr B\}$ has cardinality 
at least $|\calT_x|-1$.
Fix one~$T\in\calT$ and a finite family~$\calC=\{C_i:i<k\}$ of regular open 
sets such that $T\subseteq\bigcup\calC$ and $\calB\cap\calC=\emptyset$; 
for $i<k$ put $R_i=T\cap\bigl(C_i\setminus\bigcup_{j<i}\cl C_j\bigr)$.

Then the tiling $\calS=\calT\setminus\{T\}\cup\{R_i:i<k\}$
has the same property as~$\calT$:
for every~$x$ the family $\{B\in\calB\cup\calC:x\in\Fr B\}$ has
cardinality at least $|\calS_x|-1$.
\end{lemma}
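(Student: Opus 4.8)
The plan is to reduce everything to Lemma~\ref{lemma.1}, applied with the family~$\calC$ in the role of the family $\{B_i:i<k\}$: then the sets~$R_i$ play the role of the sets denoted~$C_i$ there, and $\{i:x\in\cl R_i\}$ plays the role of~$F_x$. Before counting I would first check that $\calS$ really is a tiling. Each $R_i=T\cap C_i\cap\bigcap_{j<i}(X\setminus\cl C_j)$ is a finite intersection of regular open sets, hence regular open; the $R_i$ are pairwise disjoint because $R_{i'}\subseteq X\setminus\cl C_i$ while $R_i\subseteq C_i$ whenever $i<i'$, and, being contained in~$T$, they are disjoint from the remaining tiles of~$\calT$ as well. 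For the covering condition the one thing to verify is $\cl T\subseteq\bigcup_{i<k}\cl R_i$: given $x\in T$ I would take the least~$i$ with $x\in\cl C_i$ and observe that, since $T$ and $X\setminus\bigcup_{j<i}\cl C_j$ are both neighbourhoods of~$x$ and $x\in\cl C_i$, every neighbourhood of~$x$ meets $T\cap\bigl(C_i\setminus\bigcup_{j<i}\cl C_j\bigr)=R_i$; hence $x\in\cl R_i$, and so the closures of~$\calS$ still cover~$X$.

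Next I would extract the key inequality from Lemma~\ref{lemma.1} together with the remark following it: for every~$x$,
\[
\bigl|\{i:x\in\cl R_i\}\bigr|\le\bigl|\{C\in\calC:x\in\Fr C\}\bigr|+1.
\]
This records that passing from $\Fr$-membership in~$\calC$ to closure-membership in the new tiles costs at most one extra set, namely the one maximal index that Lemma~\ref{lemma.1} leaves unaccounted for.

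Finally I would fix~$x$ and bound $|\calS_x|$, splitting into two cases according to whether $x\in\cl T$. If $x\notin\cl T$ then no $\cl R_i$ (being contained in~$\cl T$) contains~$x$, and $T\notin\calT_x$, so $\calS_x=\calT_x$; the hypothesis on~$\calT$ together with $\calB\subseteq\calB\cup\calC$ gives the conclusion at once. If $x\in\cl T$ then $T\in\calT_x$, so the tiles of~$\calT\setminus\{T\}$ whose closure contains~$x$ number $|\calT_x|-1$, and adjoining the new tiles yields $|\calS_x|\le(|\calT_x|-1)+\bigl|\{i:x\in\cl R_i\}\bigr|$. Combining this with the key inequality, then with the hypothesis $\bigl|\{B\in\calB:x\in\Fr B\}\bigr|\ge|\calT_x|-1$ and the disjointness $\calB\cap\calC=\emptyset$, gives
\[
|\calS_x|-1\le|\calT_x|-1+\bigl|\{C\in\calC:x\in\Fr C\}\bigr|\le\bigl|\{B\in\calB\cup\calC:x\in\Fr B\}\bigr|,
\]
which is exactly the desired property of~$\calS$. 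I expect the one delicate point to be the bookkeeping of the two ``$\pm1$'' terms: the unit lost by discarding~$T$ from~$\calT_x$ is precisely the unit that Lemma~\ref{lemma.1} concedes for the single maximal index, so the two cancel and the order bound is preserved.
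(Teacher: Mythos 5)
Your proof is correct and follows essentially the same route as the paper's: split on whether $x\in\cl T$, use Lemma~\ref{lemma.1} to bound $\bigl|\{i:x\in\cl R_i\}\bigr|$ by $\bigl|\{C\in\calC:x\in\Fr C\}\bigr|+1$, and add this to the hypothesis on~$\calB$ using $\calB\cap\calC=\emptyset$. Your additional verification that $\calS$ is in fact a tiling is a sound supplement to what the paper leaves implicit.
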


\begin{proof}
Let $x\in X$.
If $x\notin\cl T$ then $\calS_x=\calT_x$ and we are done by the assumption.

Consider the case $x\in\cl T$.
Then $\calS_x$ consists of $\calT_x\setminus\{T\}$ and 
$\{R_i:i\in G_x\}$, where $G_x=\{i:x\in\cl R_i\}$; this implies
that $|\calS_x|=|\calT_x|-1+|G_x|$.

We can apply Lemma~\ref{lemma.1} to see that $x\in\Fr C_i$ if 
$x\in\cl R_i$ and $i\neq\max G_x$.
This implies that $\{C\in\calC:x\in\Fr C\}$ has cardinality at least~$|G_x|-1$.
By assumption the family $\{B\in\calB:x\in\Fr B\}$ has cardinality at 
least $|\calT_x|-1$.

Because $\calB$ and $\calB'$ are disjoint the family 
$\{B\in\calB\cup\calC:x\in\Fr B\}$ has cardinality at least 
$\bigl(|\calT_x|-1)+\bigl(|G_x|-1)$, which is equal to~$|\calS_x|-1$. 
\end{proof}

Now we are ready to reprove the second half of Hurewicz' theorem.
              
\begin{theorem}
Let $X$ be a metrizable compact space with $\dim X\le n$.
Then there are a zero-dimensional compact metrizable space~$Y$ and
a continuous surjection $f:Y\to X$ such that 
$\bigl|f\preim(x)\bigr|\le n+1$ for all~$x$.
\end{theorem}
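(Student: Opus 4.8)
The plan is to produce $Y$ as an inverse limit of finite discrete spaces attached to a sequence of ever finer tilings of~$X$, with $f$ the induced limit map. Fix a compatible metric on~$X$. By recursion on~$m$ I would build tilings $\calT_0,\calT_1,\dots$ and finite families $\calB_0\subseteq\calB_1\subseteq\cdots$ of regular open sets so that, for every~$m$: (i)~$\calT_m$ lies in the Boolean subalgebra of $\RO(X)$ generated by~$\calB_m$; (ii)~each member of $\calT_{m+1}$ is contained in a member of $\calT_m$ and satisfies $\diam T<2^{-m}$; and (iii)~for every~$x$ one has $|(\calT_m)_x|-1\le\bigl|\{B\in\calB_m:x\in\Fr B\}\bigr|\le n$. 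The base case $\calT_0=\{X\}$, $\calB_0=\emptyset$ trivially satisfies these. The left-hand inequality in~(iii) is precisely the hypothesis of Lemma~\ref{lemma.2}; rearranged, it is also the conclusion of that lemma, so it is carried along automatically when we refine a single tile. Once the right-hand inequality holds as well, (iii) forces $|(\calT_m)_x|\le n+1$, so every tiling in the sequence has order at most~$n+1$.

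The recursion step is where $\dim X\le n$ is used and is the main obstacle. To pass from $\calT_m$ to $\calT_{m+1}$ I would treat the finitely many tiles $T\in\calT_m$ one at a time and refine each by a single application of Lemma~\ref{lemma.2}. For the tile~$T$ currently being processed I must choose a finite family $\calC$ of regular open sets, disjoint from the current generating family, covering~$T$, with $\diam C<2^{-(m+1)}$ for each $C\in\calC$, and---this is the crux---such that adjoining the members of~$\calC$ to the generating family keeps the frontier bound $\bigl|\{B:x\in\Fr B\}\bigr|\le n$ intact. Lemma~\ref{lemma.2} then preserves the left-hand inequality in~(iii), and the careful choice of~$\calC$ preserves the right-hand one; after finitely many such refinements we obtain $\calT_{m+1}$ and $\calB_{m+1}$.

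The existence of a suitable family~$\calC$ is exactly a covering-dimension statement and is where I expect the real work to lie. Since $\dim X\le n$, and hence $\dim\cl T\le n$, every finite open cover of~$\cl T$ (in particular one by sets of diameter below $2^{-(m+1)}$) admits a shrinking by regular open sets whose frontiers form a family of order at most~$n$. What is needed is the relative version of this: the new frontiers must be placed in general position with respect to the finitely many frontiers already contributed by~$\calB_m$, whose configuration already has order at most~$n$, so that the two families together nowhere cover a point more than $n$~times. Controlling the new boundaries near the (low-dimensional) set of points where the old frontier count is already equal to~$n$ is the technical heart of the argument, and is precisely the point at which covering dimension, rather than mere zero-dimensionality, is indispensable.

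Finally I would assemble the limit. The refinement relation furnishes bonding maps $\pi_m\colon\calT_{m+1}\to\calT_m$ sending each tile to the one containing it, and I set $Y=\varprojlim(\calT_m,\pi_m)$, a compact zero-dimensional space. For a thread $t=(T_m)_m$ the closures $\cl T_m$ are nonempty, compact, decreasing and of diameter tending to~$0$, so $\bigcap_m\cl T_m$ is a single point, which I take to be $f(t)$; continuity is routine. Surjectivity holds because at each level the closures of the tiles cover~$X$ and, for $x\in\cl T$ with $T\in\calT_m$, some tile of $\calT_{m+1}$ refining~$T$ again has $x$ in its closure, so a thread over a prescribed $x$ can be built. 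For the fibre bound, $f\preim(x)$ is the inverse limit of the finite sets $(\calT_m)_x$ under the restricted bonding maps; these restrictions are surjective, so the cardinalities $|(\calT_m)_x|\le n+1$ are nondecreasing, hence eventually constant with bijective bonding maps, whence $|f\preim(x)|\le n+1$.
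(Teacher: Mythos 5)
Your architecture is exactly that of the paper: tilings refined via Lemma~\ref{lemma.2}, the inverse limit $Y=\varprojlim\calT_m$, the map defined by $\bigcap_m\cl T_m$, and the fibre count via the eventually constant cardinalities $|(\calT_m)_x|\le n+1$. Those parts of your write-up are fine. But the one step you flag as ``the crux'' and ``where I expect the real work to lie'' is never actually carried out, and it is precisely the step that carries all of the dimension theory. Concretely: given the finite family $\calB_m$ with $\bigl|\{B\in\calB_m:x\in\Fr B\}\bigr|\le n$ for all $x$, you need a finite family $\calC$ of regular open sets of small diameter covering a tile $T$ such that the enlarged family $\calB_m\cup\calC$ \emph{still} satisfies $\bigl|\{B:x\in\Fr B\}\bigr|\le n$ everywhere. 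The unrelativized fact (a cover has a shrinking whose frontiers have order $\le n$) does not give this; you must push the new frontiers off the points already lying on $n$ old frontiers, which is the content of the decomposition/general-position theorem and requires its own argument (sum theorems, partitions avoiding prescribed low-dimensional $\sigma$-compact sets). As written, your proposal asserts that this can be done but proves nothing about it, so the proof is incomplete at its only non-routine point.

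The paper avoids this recursion-internal difficulty by front-loading it: it fixes, once and for all, a countable base $\{B_i:i<\omega\}$ of regular open sets with $\bigcap_{i\in F}\Fr B_i=\emptyset$ for every $F\in[\omega]^{n+1}$ (citing \cite{MR1206002}*{Corollary~6.12}), and then draws every covering family $\calC$ from the tail of this single base; the order-$n$ bound on the frontiers is then automatic at every stage and Lemma~\ref{lemma.2} does the rest. You can close your gap either by invoking the same special base (in which case your proof becomes the paper's, modulo the small bookkeeping point that one needs $\{B_i:i\ge m\}$ to remain a base for every $m$, which the paper arranges by repeating singleton neighbourhoods of isolated points), or by actually proving the relative general-position lemma you describe --- but the latter is essentially a proof of the special-base theorem itself, so nothing is saved.
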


\begin{proof}
Let $\{B_i:i<\omega\}$ be a base for $X$ consisting of regular open sets
and such that $\bigcap_{i\in F}\Fr B_i=\emptyset$ whenever 
$F\in[\omega]^{n+1}$;
see, e.g., \cite{MR1206002}*{Corollary~6.12}. 
For technical reasons we assume that for every isolated point~$x$ in~$X$
the set $\bigl\{i:B_i=\{x\}\bigr\}$ is infinite; 
because $\Fr\{x\}=\emptyset$ whenever $x$~is isolated this does not interfere 
with the intersection property of the family $\{\Fr B_i:i<\omega\}$.

By recursion we construct a sequence $\langle \calT_k:k<\omega\rangle$ of 
tilings such that for all~$k$ the family $\calT_{k+1}$ refines $\calT_k$,
such that $\ord\{\cl T:T\in\calT_k\}\le n+1$ for all~$k$, and such that
$\lim_k\max\{\diam T:T\in\calT_k\}=0$.

Each $\calT_k$ is given the discrete topology and our compact zero-dimensional
space~$Y$ is the subspace of the product $\prod_{k<\omega}\calT_k$ defined by
$$
\langle T_k:k<\omega\rangle\in Y
\text{ iff } (\forall k)(T_{k+1}\subseteq T_k)
$$
Clearly $Y$~is closed in the product, hence compact, metrizable and 
zero-dimensional.
If $\langle T_k:k<\omega\rangle\in Y$ then $\bigcap_{k<\omega}\cl T_k$ consists
of exactly one point (by compactness and the diameter condition);
this defines a map~$f$ from~$Y$ to~$X$.

The map is (uniformly) continuous: if $\epsilon>0$ there is a~$k$ such that
$\diam T<\epsilon$ for all~$T\in\calT_k$.
If $T=\langle T_k:k<\omega\rangle$ and $S=\langle S_k:k<\omega\rangle$ in~$Y$ 
are such that $T_k=S_k$ then $f(T),f(S)\in\cl T_k$ and 
hence $d\bigl(f(T),f(S)\bigr)<\epsilon$.

The map~$f$ is onto: if $x\in X$ then it is easy to find $T\in Y$ such that
$x\in T_k$ for all~$k$; then $x=f(T)$.

The map~$f$ is at most $n+1$-to-one.
Indeed, let $x\in X$ and for each~$k$ let 
$\calT_{k,x}=\{T\in\calT_k:x\in\cl T\}$.
Then $|\calT_{k,x}|\le|\calT_{k+1,x}|\le n+1$ for all~$k$; this means
that there is a~$k_0$ such that $|\calT_{k,x}|=|\calT_{k_0,x}|$ for $k\ge k_0$.
And this implies that $T\mapsto T_{k_0}$ is a bijective map from $f\preim(x)$
to~$\calT_{k_0,x}$ and thus: $\bigl|f\preim(x)\bigr|\le n+1$.

It remains to construct the $\calT_k$.
We set~$\calT_0=\{X\}$.
We assume we have found $\calT_k$ as a subset of the Boolean algebra generated
by $\{B_i:i< l\}$ for some~$l$ and that the assumptions of Lemma~\ref{lemma.2}
are satisfied: for every $x\in X$ there are at least $|\calT_{k,x}|-1$ 
indices~$i<l$ such that $x\in\Fr B_i$.

Because for each isolated point~$x$ the set~$\{x\}$ occurs as $B_i$ 
infinitely often we know that for every~$m$ the family $\{B_i:i\ge m\}$ 
is a base for~$X$.
We can therefore find pairwise disjoint finite subsets $F_T$ of
$(l,\omega)$ for $T\in\calT_k$ such that $\cl T\subseteq\bigcup_{i\in F_T}B_i$
and $\diam B_i<2^{-k}$ for all~$i$.
We can use these, as in Lemma~\ref{lemma.2} to define tilings of 
each~$T\in\calT_k$: for $i\in F_T$ put 
$C_{T,i}=T\cap\bigl(B_i\setminus\bigcup_{j\in F_T,j<i}\cl B_j\bigr)$.
Repeated application of Lemma~\ref{lemma.2} shows that
$\calT_{k+1}=\{C_{T,i}:i\in F_T,T\in\calT_k\}$ has the same property 
as~$\calT_k$: for each~$x$ there are at least $|\calT_{k+1,x}|-1$ indices~$i$ 
in $l\cup\bigcup_{T\in\calT_k}F_T$ such that $x\in\Fr B_i$.

To see that $|\calT_{k,x}|\le n+1$ for all~$k$ and all~$x$ we combine
two inequalities:
first, by construction we have 
$\bigl|\{i:x\in \Fr B_i\}\bigr|\ge |\calT_{k,x}|-1$;
second, by assumption on our base we have $n\ge\bigl|\{i:x\in \Fr B_i\}\bigr|$.
This implies $|\calT_{k,x}|-1\le n$.
\end{proof}

As mentioned in the introduction, in~\cite{Kuratowski1932} Kuratowski gave
a quantitative version of this half of Hurewicz' theorem: if $X$~is compact,
metrizable, $n$-dimensional and without isolated points then the set
of maps all of whose fibers have size $n+1$ or less is residual in the
space of all surjections from the Cantor set to~$X$.
The covering dimension is invoked to show that, given a natural number~$k$,
the set of maps with a fiber of size at least $n+2$ in which the points are
at least distance~$2^{-k}$ apart is nowhere dense (it is also readily seen
to be closed).

\section{Equality of dimensions}
\label{sec.dim=ind=Ind}

It is well known that the three fundamental dimension functions take on the
same values for all separable metrizable spaces.
We prove that this also holds in the class of compact $F$-spaces of 
weight~$\cee$, provided the Continuum Hypothesis holds.

In the proof we use Hemmingsen's characterization of covering dimension
(\cite{MR1363947}*{Theorem~1.6.9}):
$\dim X\le n$ if{}f every $n+2$-element open cover has an open shrinking
with empty intersection.

\begin{theorem}[$\CH$]
Let $X$ be a compact $F$-space of weight~$\cee$.
Then $\dim X=\ind X=\Ind X$.\label{thm.dim=ind=Ind}  
\end{theorem}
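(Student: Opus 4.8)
The plan is to prove the chain of inequalities $\ind X\le\Ind X\le\dim X$ together with the always-valid $\dim X\le\Ind X$, thereby collapsing all three to a single value. Two of these are free: the inequality $\ind X\le\Ind X$ holds for all normal spaces, and $\dim X\le\Ind X$ holds for all compact Hausdorff spaces (indeed for all normal spaces). The substance of the theorem is therefore the reverse inequality $\Ind X\le\dim X$, which I would prove by induction on $n=\dim X$. The base case $n=-1$ (or $n=0$) is immediate since $\dim X=0$ together with compactness and the $F$-space structure forces a clopen basis. So the real work is the inductive step.

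For the inductive step I would use Hemmingsen's characterization, quoted just before the statement: $\dim X\le n$ means every $(n+2)$-element open cover admits an open shrinking with empty intersection. The standard reduction of $\Ind$ to $\dim$ proceeds by taking disjoint closed sets $A,B$ and producing a partition $L$ between them with $\Ind L\le n-1$; one then wants to invoke the inductive hypothesis on $L$. To run this, I first need the separating set $L$ itself to be a compact $F$-space of weight~$\cee$, so that the hypothesis applies. Here is where the hypotheses genuinely enter: I would build $L$ as (the closure of) a regular-open partition, realized as a zero-set or cozero-type subspace, and use that closed subspaces of compact $F$-spaces are again compact $F$-spaces, while the weight stays at~$\cee$. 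The construction of the partition from the covering-dimension data is the delicate combinatorial heart: given $A,B$ disjoint closed sets, one covers $X$ by finitely many cozero sets of small "order" and extracts a thin separator whose covering dimension drops by one.

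The hard part, and where the Continuum Hypothesis and the $F$-space property are indispensable, is controlling the \emph{dimension of the separator}~$L$, i.e.\ showing $\dim L\le n-1$ so that the induction can fire. In the classical separable metrizable proof this is the subtraction theorem $\dim L\le\dim X-1$ for a suitably chosen partition, and its proof uses countable manipulations of open covers. In the present setting those manipulations must be carried out along a recursion of length~$\omega_1$: weight~$\cee=\aleph_1$ gives a cofinal $\omega_1$-indexed family of covers, and the defining property of $F$-spaces --- that separated $F_\sigma$-sets have disjoint closures --- is exactly what lets one "do countably many things at a time" to glue the partial separators coherently at limit stages. I expect the genuine obstacle to be verifying, at limit ordinals of the recursion, that the regular-open sets produced so far still assemble into a partition with the correct order bound; the $F$-space axiom is precisely the tool that keeps closures of the countably-many accumulated pieces disjoint, so that the limit stage does not destroy the dimension drop. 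Once $\dim L\le n-1$ is established and $L$ is seen to lie in the same class, the inductive hypothesis gives $\Ind L\le n-1$, hence $\Ind X\le n$, completing the induction and the proof.
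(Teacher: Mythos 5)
Your overall route coincides with the paper's: the content is $\Ind X\le\dim X$, proved by descending induction via partitions --- between any two disjoint closed sets one produces a partition $L$ with $\dim L\le n-1$ (a closed subspace, hence again a compact $F$-space of weight at most~$\cee$), using an $\omega_1$-recursion in which $\CH$ supplies a base of size~$\aleph_1$, the $F$-space property handles limit stages, and Hemmingsen's criterion certifies the dimension drop. Two genuine gaps remain. The first is logical bookkeeping: from $\ind X\le\Ind X$, $\dim X\le\Ind X$ and $\Ind X\le\dim X$ you obtain $\dim X=\Ind X$ and $\ind X\le\dim X$, but nothing in your list forces $\ind X\ge\dim X$, so the equality $\ind X=\dim X$ does not follow. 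You need the classical inequality $\dim X\le\ind X$ for compact Hausdorff spaces; the paper starts from the chain $\dim X\le\ind X\le\Ind X$ precisely for this reason.

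The second gap is that the ``delicate combinatorial heart'' you defer \emph{is} the theorem; without it the claim $\dim L\le n-1$ is unsupported. The paper's mechanism is: fix a cozero base $\calB$ of cardinality~$\aleph_1$ closed under countable unions and finite intersections, enumerate \emph{all} $(n+1)$-element subfamilies of~$\calB$ as $\langle\calB_\alpha:\alpha<\omega_1\rangle$ with cofinal repetitions, and grow increasing sequences $U_\alpha\supseteq F$ and $V_\alpha\supseteq G$ in~$\calB$ with disjoint closures. At a successor stage where $U_\alpha\cup V_\alpha\cup\bigcup\calB_\alpha=X$ one applies $\dim X\le n$ to a suitable $(n+2)$-element cover to get a shrinking $\calB'_\alpha$ of~$\calB_\alpha$ whose intersection is then absorbed into $U_{\alpha+1}\cup V_{\alpha+1}$. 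Afterwards every $(n+1)$-element basic open cover of $L=X\setminus\bigcup_\alpha(U_\alpha\cup V_\alpha)$ arises, by cofinal repetition, as the trace of some $\calB_\alpha$ at a stage where it covers the residual set, and so acquires a shrinking with empty intersection on~$L$; Hemmingsen then gives $\dim L\le n-1$. Note also that you locate the main obstacle at limit ordinals, but in the paper the limit step is the easy one: closure of~$\calB$ under countable unions puts $U_\lambda=\bigcup_{\beta<\lambda}U_\beta$ back in~$\calB$, and the $F$-space property keeps $\cl U_\lambda\cap\cl V_\lambda=\emptyset$, in two lines. The substantive step, where $\dim X\le n$ is actually consumed, is the successor step just described.
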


\begin{proof}
The inequalities $\dim X\le\ind X\le\Ind X$ are well known.
We show $\Ind X\le\dim X$ by showing that $\dim X\le n$
implies that between any two disjoint closed sets $F$ and~$G$ one can find
a partition~$L$ that satisfies $\dim L\le n-1$.
This is known to be true in case~$n=0$, so we assume $n\ge1$ from now on.  

Fix a base $\calB$ for~$X$ that consists of cozero sets,
has cardinality~$\aleph_1$ (by the $\CH$) and is closed under countable unions
and finite intersections.

Let $\langle\calB_\alpha:\alpha<\omega_1\rangle$ enumerate the family
of all $n+1$-element subfamilies of~$\calB$ with cofinal repetitions.
We write $\calB_\alpha=\{B_{\alpha,i}:i\le n\}$.

We construct, by recursion, two sequences 
$\langle U_\alpha:\alpha<\omega_1\rangle$ and
$\langle V_\alpha:\alpha<\omega_1\rangle$
in~$\calB$ such that
\begin{enumerate}
\item $F\subseteq U_0$ and $G\subseteq V_0$;
\item $\cl U_\alpha\subseteq U_\beta$ and $\cl V_\alpha\subseteq V_\beta$
      whenever $\alpha<\beta$;
\item $\cl U_\alpha\cap\cl V_\alpha=\emptyset$ for all~$\alpha$;
\item if $U_\alpha\cup V_\alpha\cup\bigcup\calB_\alpha=X$ then there is a
 subfamily $\calB'_\alpha=\{B'_{\alpha,i}:i\le n\}$ of~$\calB$ that 
 refines~$\calB_\alpha$ and is such that
 $U_{\alpha+1}\cup V_{\alpha+1}\cup\bigcup\calB'_\alpha=X$ and
 $\bigcap\calB'_\alpha\subseteq U_{\alpha+1}\cup V_{\alpha+1}$. 
\end{enumerate}
Then $L=X\setminus\bigcup_\alpha(U_\alpha\cup V_\alpha)$ is a partition between
$F$ and $G$ and $\dim L\le n-1$.
That $L$~is a partition between~$F$ and~$G$ follows from~(1), (2) and~(3).
To see that $\dim L\le n-1$ let $\calO$ be an $n+1$-element basic open cover 
of~$L$.
There is an~$\alpha$ such that $\calO=\{L\cap B_{\alpha,i}:i\le n\}$ and
such that $X\setminus(U_\alpha\cup V_\alpha)\subseteq\bigcup\calB_\alpha$.
By construction $\calO'=\{L\cap B'_{\alpha,i}:i\le n\}$ is a refinement
of~$\calO$ such that~$\bigcap\calO'=\emptyset$.

It remains to perform the construction.
We obtain $U_0$ and $V_0$ using compactness and the fact that $\calB$ is closed
under finite unions.
If $\alpha$ is a limit we let $U_\alpha=\bigcup_{\beta<\alpha}U_\beta$
and $V_\alpha=\bigcup_{\beta<\alpha}V_\beta$.
Then $U_\alpha, V_\alpha\in\calB$ by the assumption that $\calB$ is closed
under countable unions and $\cl U_\alpha\cap\cl V_\alpha=\emptyset$
because $X$~is an $F$-space.

To deal with the successor case we first take elements 
$C$ and~$D$ of~$\calB$ such that
$\cl U_\alpha\subseteq C$, 
\ $\cl V_\alpha\subseteq D$ and $\cl C\cap\cl D=\emptyset$.

If $U_\alpha\cup V_\alpha\cup\bigcup\calB_\alpha=X$ we 
put $E_i=B_{\alpha,i}\setminus(\cl U_\alpha\cup\cl V_\alpha)$ for $i\le n$.
Then we apply the inequality $\dim X\le n$ to find a shrinking
$\{B'_{\alpha,i}:i\le n\}\cup\{O\}$ of
$\{E_i:i\le n\}\cup\{C\cup D\}$ such that 
$\cl O\cap\bigcap_i\cl B'_{\alpha,i}=\emptyset$.
Let $O_1=O\cap C$ and $O_2=O\cap D$; also let $K=\bigcap_i\cl B'_{\alpha,i}$.
Note that $\cl U_\alpha\subseteq O_1$ and $\cl V_\alpha\subseteq O_2$
and that the family $\{K,\cl O_1, \cl O_2\}$ is pairwise disjoint.
We choose $U_{\alpha+1}$ and $V_{\alpha+1}$ in~$\calB$ with disjoint closures
such that $K\cup \cl O_1\subseteq U_{\alpha+1}$ and
$\cl O_1\subseteq U_{\alpha+1}$.
Then the conclusion of~(4) is satisfied.

If $U_\alpha\cup V_\alpha\cup\bigcup\calB_\alpha\neq X$ we let
$U_{\alpha+1}=C$ and $V_{\alpha+1}=D$.
\end{proof}

\begin{remark}
This proof is similar to the one given in~\cite{MR1363947} of the analogous 
result for compact metrizable spaces.
That proof used a metric to guide the countably many steps toward a partition
of covering dimension at most $n-1$.
Of course in an infinite compact $F$-space there is no metric available; in our
proof the role of the metric is taken over (in the background) by the unique
uniformity that generates the topology of the compact $F$-space.
\end{remark}

\begin{remark}
The second-named author has constructed an example of a compact $F$-space
of weight~$\cee^+$ with non-coinciding dimensions, \cite{vanMill-inprep}.  
\end{remark}

\section{Special bases}
\label{sec.special.bases}

In Section~\ref{sec.Hurewicz} we used the fact that a metrizable compact
space~$X$ with $\dim X\le n$ has a base $\{B_i:i<\omega\}$ with the property
that $\bigcap_{i\in F}\Fr B_i=\emptyset$ whenever $|F|=n+1$.
This is a special case of a stronger structural statement: every metrizable 
compact space has a base $\{B_i:i<\omega\}$ with the property that
$\dim\bigcap_{i\in F}\Fr B_i\le\dim\Fr B_{i_0}-|F|+1$, where $i_0=\min F$.

Our goal is to prove a similar statement for compact $F$-spaces of 
weight~$\cee$, assuming the Continuum Hypothesis.

In general, if $X$ is a compact space of weight~$\aleph_1$ we shall assume
it is embedded in the Tychonoff cube~$[0,1]^{\omega_1}$ and for 
$\alpha<\omega_1$ we write $X_\alpha=\{x\restr\alpha:x\in X\}$; this is the 
projection of~$X$ onto the first $\alpha$ coordinates.
We denote this projection map by~$p_\alpha$, we reserve $\pi_\alpha$ for the
projection onto the $\alpha$th coordinate.

\begin{lemma}
There is a closed and unbounded subset~$C$ of~$\omega_1$ such that
$\dim X_\alpha=\dim X$ for $\alpha\in C$.  
\end{lemma}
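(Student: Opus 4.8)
The plan is to regard $X$ as the limit of the inverse system $\langle X_\alpha,p_{\beta\alpha}\rangle$, where for $\alpha<\beta$ the bonding map $p_{\beta\alpha}\colon X_\beta\to X_\alpha$ is the restriction of the projection; all $p_\alpha$ and all $p_{\beta\alpha}$ are continuous surjections, and for limit $\alpha$ one has $X_\alpha=\varprojlim_{\beta<\alpha}X_\beta$. I will use two elementary facts throughout. First, since a basic open set of $X$ depends on finitely many coordinates, the sets $p_\alpha\preim[O]$ (with $O$ open in some $X_\alpha$) form a base, so by compactness every finite open cover of $X$ is refined by a pullback cover $p_\alpha\preim[\mathcal{V}]$ for a suitable $\alpha$ and a finite open cover $\mathcal{V}$ of $X_\alpha$; the same holds inside each $X_\lambda$ for limit $\lambda$, using coordinates below $\lambda$. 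Second, because the maps are onto, pulling a family back along $p_\alpha$ (or $p_{\beta\alpha}$) preserves the covering property and the order at every point, and turns a family with empty intersection into one with empty intersection. Write $d=\dim X$.

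For the lower bound I would locate a single level past which the dimension is already at least $d$. Fix a finite open cover $\mathcal{U}$ of $X$ witnessing $\dim X\not\le d-1$, i.e.\ one admitting no open refinement of order~$\le d$. By the first fact I may refine $\mathcal{U}$ by a pullback cover $p_{\beta_0}\preim[\mathcal{V}]$; since every refinement of the latter refines $\mathcal{U}$, this pullback again admits no order-$\le d$ refinement, and by the second fact neither does $\mathcal{V}$ on $X_{\beta_0}$, so $\dim X_{\beta_0}\ge d$. Moreover, for any $\alpha\ge\beta_0$ an order-$\le d$ refinement of $p_{\alpha\beta_0}\preim[\mathcal{V}]$ in $X_\alpha$ would pull back along $p_\alpha$ to such a refinement of $\mathcal{U}$, which is impossible; hence $\dim X_\alpha\ge d$ for every $\alpha\ge\beta_0$. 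If $\dim X=\infty$ one runs this for each $n$ in place of $d$ and passes to the supremum of the resulting levels, which already settles that case, so I assume $d$ finite from now on.

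For the upper bound I would run a closing-off argument built on Hemmingsen's characterization: $\dim Y\le d$ iff every $(d+2)$-element open cover of $Y$ has an open shrinking with empty intersection. The engine is a reflection claim: for every $\beta$ and every $(d+2)$-element cover $\mathbf{O}=(O_i)_{i\le d+1}$ of $X_\beta$ there are a level $\gamma=g(\beta,\mathbf{O})\ge\beta$ and a shrinking of $(p_{\gamma\beta}\preim[O_i])_{i}$ in $X_\gamma$ that covers $X_\gamma$ and has empty intersection. To prove the claim I pull $\mathbf{O}$ back to $X$, invoke $\dim X\le d$ to obtain an empty-intersection shrinking there, refine that shrinking by a finite pullback cover from one level $\gamma$ (first fact), regroup its pieces according to which $O_i$ they lie under, and push the result down to $X_\gamma$; the second fact keeps the covering and empty-intersection properties intact. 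Letting $\mathbf{O}$ range only over a fixed countable base $\mathcal{D}_\beta$ of the metrizable space $X_\beta$, closed under finite unions, makes the domain of $g$ countable on each level, so the set $C_0$ of limit ordinals $\alpha$ with $g(\beta,\mathbf{O})<\alpha$ for all $\beta<\alpha$ and all admissible $\mathbf{O}$ is closed and unbounded.

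Finally I would verify $\dim X_\alpha\le d$ for $\alpha\in C_0$. Given a $(d+2)$-element open cover of $X_\alpha$, I first pass to a closed shrinking and then, using $X_\alpha=\varprojlim_{\beta<\alpha}X_\beta$ and the first fact, fatten each closed piece to a basic pullback set coming from a single level $\beta<\alpha$ with trace in $\mathcal{D}_\beta$; this produces a $(d+2)$-element cover $\mathbf{O}$ of $X_\beta$ whose pullbacks refine the given cover. Because $\alpha$ is closed under $g$, the level $\gamma=g(\beta,\mathbf{O})$ lies below $\alpha$, so the empty-intersection shrinking of $(p_{\gamma\beta}\preim[O_i])_i$ already lives in $X_\gamma$; pulling it up along $p_{\alpha\gamma}$ yields, by the second fact, an empty-intersection shrinking of the original cover, whence $\dim X_\alpha\le d$. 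Intersecting $C_0$ with the tail $\{\alpha:\alpha\ge\beta_0\}$ gives the required club. I expect the upper bound to be the crux: a continuous surjection---even a coordinate projection---can raise covering dimension, so one cannot simply transport a low-order refinement from $X$ down to $X_\alpha$. The way around this is to express everything through Hemmingsen's empty-intersection shrinkings, which are stable under the pullbacks of the second fact, and to perform the descent at a reflected level $\gamma<\alpha$ rather than on $X_\alpha$ directly.
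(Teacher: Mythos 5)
Your argument is correct and is essentially the paper's own: both run a L\"owenheim--Skolem closing-off over the countably many finite basic covers supported below each level (exploiting that pullback families are saturated, so covering, refinement, order and empty-intersection data transfer faithfully between $X$ and the $X_\alpha$), and both get the lower bound from a single reflected witness cover placed below the club. The only cosmetic difference is that you reflect Hemmingsen's empty-intersection--shrinking characterization of $\dim\le n$ where the paper reflects order-$\le n+1$ refinements directly; the two are interchangeable here.
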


\begin{proof}
The cube $[0,1]^{\omega_1}$ has a nice subbase, which consists of
the strips $\pi_\alpha\preim\bigl[[0,q)\bigr]$ and 
$\pi_\alpha\preim\bigl[(q,1]\bigr]$, where $\alpha<\omega_1$ and 
$q\in\Q\cap(0,1)$.
We close this subbase under finite unions and intersections to obtain a 
base~$\calB$ for the cube.

First we assume $\dim X=n<\infty$.
In this case if $\calB'$ is a finite subfamily of~$\calB$ that covers~$X$ 
then there is another finite subfamily~$\calB''$ of~$\calB$ that also 
covers~$X$, refines~$\calB'$ and is such that 
$\bigl|\{B\in\calB'':x\in B\}\bigr|\le n+1$ for all~$x\in X$.

Observe that each finite subfamily~$\calC$ of~$\calB$ is supported by a finite
subset~$F_\calC$ of~$\omega_1$ (the coordinates of the strips used to make
its elements).
Next note that, given $\alpha<\omega_1$, there are only countably many finite
subfamilies of~$\calB$ whose support lies below~$\alpha$.
Thus we obtain a function $f:\omega_1\to\omega_1$, defined by
\begin{quote} 
$f(\alpha)$~is the first countable ordinal~$\beta$ such that whenever 
$\calB'$~is a finite subfamily of~$\calB$ that covers~$X$ and whose support 
lies below~$\alpha$ then it has a refinement of order at most~$n+1$ whose 
support lies below~$\beta$.
\end{quote}
The set 
$C=\{\delta<\omega_1:(\forall\alpha)(\alpha<\delta \to f(\alpha)<\delta)\}$
is closed and unbounded and it should be clear that $\dim X_\delta\le n$
whenever $\delta\in C$.
To get equality we note that there is also a finite cover~$\calC$ of~$X$ by 
members of~$\calB$ for which \emph{every} open refinement has order~$n+1$.
Upon deleting an initial segment of~$C$ we can assume that~$\calC$ is supported
below~$\min C$; then $\calC$~witnesses that $\dim X_\delta\ge n$ for 
all~$\delta\in C$.

In case $X$~is infinite-dimensional we have for each~$n$ a finite 
cover~$\calC_n$ such that every open refinement has order at least~$n$.
For any~$\alpha$ above the supports of these covers we 
have $\dim X_\alpha=\infty$.
\end{proof}

The following proposition is instrumental in the construction of the type
of base alluded to above.
It also provides another proof of Theorem~\ref{thm.dim=ind=Ind}.
In it we use the notion of a $P$-set: a subset of a space is a $P$-set
if whenever it is disjoint from an $F_\sigma$-subset it is also disjoint the
closure of thet $F_\sigma$-set; in terms of neighbouirhoods:
the intersection of countably many neighbourhoods of the set is again a 
neighbourhood of the set.
The members of our base will have nowhere dense closed $P$-sets for boundaries.

\begin{proposition}[$\CH$]\label{prop.part}
Let $X$ be a compact $F$-space of weight~$\cee$.
Let $F$ and $G$ be disjoint closed subsets of~$X$ and let $\calQ$ be a family
of no more than~$\aleph_1$ many nowhere dense closed $P$-sets in~$X$.
There are disjoint regular open sets $U$ and $V$ such that
\begin{enumerate}
\item $F\subseteq U$ and $G\subseteq V$,
\item $P=X\setminus(U\cup V)$ is a nowhere dense $P$-set,
\item if $\dim X<\infty$ then $\dim P\le\dim X-1$,
\item if $Q\in\calQ$ then $P\cap Q$ is nowhere dense in~$Q$ and
      if $\dim Q<\infty$ then $\dim(P\cap Q)\le\dim Q-1$.
\end{enumerate}
\end{proposition}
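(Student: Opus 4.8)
The plan is to run a recursion of length $\omega_1$ that mirrors the proof of Theorem~\ref{thm.dim=ind=Ind}: I build two increasing towers $\langle U_\alpha:\alpha<\omega_1\rangle$ and $\langle V_\alpha:\alpha<\omega_1\rangle$ of cozero sets drawn from a fixed base $\calB$ of size~$\aleph_1$ (closed under countable unions and finite intersections), keeping the invariants $F\subseteq U_0$, $G\subseteq V_0$, $\cl{U_\alpha}\subseteq U_\beta$ and $\cl{V_\alpha}\subseteq V_\beta$ for $\alpha<\beta$, and $\cl{U_\alpha}\cap\cl{V_\alpha}=\emptyset$. Limit stages are handled by taking unions, where the $F$-space property of~$X$ guarantees that the two unions again have disjoint closures, and the nesting $\cl{U_\alpha}\subseteq U_{\alpha+1}$ is exactly what forces $X\setminus\bigcup_\alpha(U_\alpha\cup V_\alpha)$ to be a $P$-set (any $\sigma$-compact set missing it is swallowed by some $U_\beta\cup V_\beta$, hence so is its closure). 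The novelty is that the successor stages now service three kinds of task, enumerated with cofinal repetitions in $\omega_1$. Since $\calB$, its finite subfamilies, and $\calQ$ all have size at most~$\aleph_1$, there are only $\aleph_1$ tasks and the bookkeeping fits into $\omega_1$ steps.

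The first two families of tasks are easy. To make $P$ and each $P\cap Q$ nowhere dense I schedule, for every basic open set $B$ (of~$X$, respectively of some~$Q$), the task of meeting it: if $B$ is still disjoint from $U_\alpha\cup V_\alpha$ I pick $x\in B$; since $\cl{U_\alpha}$ and $\cl{V_\alpha}$ are disjoint, $x$ lies outside at least one of them, and I enlarge the corresponding tower by a small cozero neighbourhood of~$x$ whose closure misses the opposite closure. This forces $B\cap(U\cup V)\neq\emptyset$, so neither $P$ nor any $P\cap Q$ can contain a (relatively) nonempty open set. The Hemmingsen shrinking task securing $\dim P\le\dim X-1$ (when $\dim X=n<\infty$) is carried out verbatim as in Theorem~\ref{thm.dim=ind=Ind}: to an $(n+1)$-element subfamily covering $P_\alpha:=X\setminus(U_\alpha\cup V_\alpha)$ one applies $\dim X\le n$ to the cover $\{E_i\}\cup\{C\cup D\}$ and absorbs the intersection of the shrinking into $U_{\alpha+1}\cup V_{\alpha+1}$.

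The crux is the relative version of this step, which controls $\dim(P\cap Q)$ for each $Q\in\calQ$ with $\dim Q=m<\infty$; here I use the trace base $\{B\cap Q:B\in\calB\}$ of~$Q$. Whenever an $(m+1)$-element family $\{A_i:i\le m\}$ of relatively open sets covers $Q\cap P_\alpha$, I choose cozero $C\supseteq\cl{U_\alpha}$ and $D\supseteq\cl{V_\alpha}$ in~$\calB$ with $\cl C\cap\cl D=\emptyset$, set $E_i=A_i\setminus\bigl(Q\cap(\cl{U_\alpha}\cup\cl{V_\alpha})\bigr)$, and apply Hemmingsen's criterion \emph{inside~$Q$} to the $(m+2)$-element cover $\{E_i:i\le m\}\cup\{Q\cap(C\cup D)\}$, using $\dim Q\le m$. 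This returns a shrinking $\{A'_i:i\le m\}\cup\{O\}$ with $K:=\bigcap_i\operatorname{cl}_Q A'_i$ disjoint from $\operatorname{cl}_Q O$. Splitting $O$ as $O\cap C$ and $O\cap D$, and checking exactly as in the absolute case that $\{K,\operatorname{cl}_Q(O\cap C),\operatorname{cl}_Q(O\cap D)\}$ is a pairwise disjoint family of compact sets whose first two members avoid $\cl{V_\alpha}$ and whose last avoids $\cl{U_\alpha}$, I can—because $X$ is a compact $F$-space—choose $U_{\alpha+1},V_{\alpha+1}\in\calB$ with disjoint closures absorbing $\cl{U_\alpha}\cup K\cup\operatorname{cl}_Q(O\cap C)$ into the first and $\cl{V_\alpha}\cup\operatorname{cl}_Q(O\cap D)$ into the second. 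Then $\bigcap_i A'_i\subseteq K\subseteq U_{\alpha+1}$, so $\{A'_i\}$ has empty intersection on~$P$; since every $(m+1)$-element basic cover of $P\cap Q$ is serviced at some stage (a compactness argument shows that once such a family covers $Q\cap P$ it covers $Q\cap P_\alpha$ for all large~$\alpha$), Hemmingsen's criterion yields $\dim(P\cap Q)\le m-1$.

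I expect the main obstacle to be precisely this simultaneous relativisation: one must run all of the $Q$-steps through the \emph{same} tower of disjoint-closure cozero sets, verify that each $Q$-absorption leaves the global $X$-level invariants intact, and use monotonicity to see that every condition, once achieved at its stage, persists as the towers grow. The hypothesis that each $Q$ is a closed $P$-set in the $F$-space~$X$ is what keeps the countably many operations performed at each limit stage compatible with every~$Q$ at once. A final, routine point is the passage from the cozero towers to the disjoint \emph{regular open} sets demanded by the statement: I would take $U=X\setminus\cl{\bigl(\bigcup_\alpha V_\alpha\bigr)}$ and $V=X\setminus\cl{\bigl(\bigcup_\alpha U_\alpha\bigr)}$, which are disjoint regular open sets (disjointness uses the density of $\bigcup_\alpha(U_\alpha\cup V_\alpha)$ secured by the first family of tasks), and whose complement $P$ is a nowhere dense $P$-set still satisfying~(3) and~(4), being a closed subset of $X\setminus\bigcup_\alpha(U_\alpha\cup V_\alpha)$ to which both $\dim$ and nowhere-density descend.
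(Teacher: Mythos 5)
Your architecture is genuinely different from the paper's. The paper embeds $X$ in $[0,1]^{\omega_1}$, passes along a club of ordinals to the metrizable quotients $X_\delta$ on which $\dim$ is preserved, uses the separable\-/metric decomposition theorem there to choose dense zero-dimensional $F_\sigma$-sets $Z_{\alpha,\delta}$ and partitions missing them, and realizes $P$ as an inverse limit of those partitions. You instead run a single task-driven recursion in $X$ itself, with Hemmingsen shrinkings performed absolutely and relativised to each $Q\in\calQ$. The core of your argument is sound: the bookkeeping fits into $\omega_1$ under $\CH$, the limit stages are covered by the $F$-space property, the compactness argument that a cover of $Q\cap P$ covers $Q\cap P_\alpha$ for large $\alpha$ is correct, and the relative shrinking step does work --- $Q\cap(\cl U_\alpha\cup\cl V_\alpha)$ is disjoint from every $E_i$, hence contained in $O$, which is what makes $K$ disjoint from $\cl U_\alpha\cup\cl V_\alpha$ and lets you absorb $K\cup\operatorname{cl}_Q(O\cap C)$ and $\operatorname{cl}_Q(O\cap D)$ into the two towers.

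The genuine gap is the final passage to regular open sets. Replacing $U=\bigcup_\alpha U_\alpha$ and $V=\bigcup_\alpha V_\alpha$ by $U'=X\setminus\cl V$ and $V'=X\setminus\cl U$ changes the partition from $P=X\setminus(U\cup V)$ to $P'=\cl U\cap\cl V$, and your justification that $P'$ is still a $P$-set --- that it is a closed subset of $P$ --- is not valid: dimension and nowhere density pass to closed subsets, but the $P$-set property does not (a point of a $P$-set need not be a $P$-point). Nor does your original argument transfer: a $\sigma$-compact set disjoint from $P'$ lies in $U'\cup V'$, but $U'$ and $V'$ are not exhausted by towers with nested closures, so it cannot be swallowed into a single stage; its part inside $U'\setminus U\subseteq P$ is a $\sigma$-compact subset of $P\setminus\cl V$, and since $V$ is a union of $\aleph_1$ cozero sets, $\cl V$ may properly contain $\bigcup_\alpha\cl{V_\alpha}$ and such a set can accumulate on $\Fr V$. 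The repair is to build regular-openness into the recursion rather than retrofit it: schedule, with cofinal repetitions, the task that for each basic $B$ either $\cl B$ is absorbed into $U_{\alpha+1}\cup V_{\alpha+1}$ (automatic once $B\subseteq\cl U_\alpha\cup\cl V_\alpha$) or $B$ is made to meet both towers (if $B\setminus(\cl U_\alpha\cup\cl V_\alpha)$ is a nonempty open set it either contains two pieces that can be fed to opposite towers or is a single isolated point, after which the first alternative takes over at the next visit). This forces every point of $P$ into $\cl U\cap\cl V$, so $P=\cl U\cap\cl V$, the sets $U=X\setminus\cl V$ and $V=X\setminus\cl U$ are already regular open, and no final adjustment --- and no new $P$-set verification --- is needed.
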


\begin{proof}
We may as well assume that $\dim X=n<\infty$ and that $\calQ$ has 
cardinality~$\aleph_1$. 
The proof is easily modified in case either of these is not the case.

Choose a closed and unbounded set~$C$ such that $\dim X_\delta=\dim X$ 
for $\delta\in C$ and assume without loss of generality (and by compactness)
that $p_\delta[A]\cap p_\delta[B]=\emptyset$, where $\delta=\min C$.

Enumerate $\calQ$ as $\{Q_\alpha:\alpha<\omega_1\}$ and choose for each~$\alpha$
a closed and unbounded subset~$C_\alpha$ of~$C$ such that 
$\dim p_\delta[Q_\alpha]=\dim Q_\alpha$ whenever $\delta\in C_\alpha$.
Because the intersection of countably many closed and unbounded sets
is again closed and unbounded we may as well assume that 
$C_\beta\subseteq C_\alpha$ whenever $\alpha<\beta$.

In case $\delta\in C_\alpha$ we can choose a zero-dimensional
$F_\sigma$-set $Z_{\alpha,\delta}$ in~$X_\delta$ such that
$Z_{\alpha,\delta}$~is dense in~$X_\delta$, the intersection 
$Z_{\alpha,\delta}\cap p_\delta[Q_\alpha]$~is dense in~$p_\delta[Q_\alpha]$, 
and such that
\begin{equation}\label{eq.5}
\dim(X_\delta\setminus Z_{\alpha,\delta})\le n-1\text{ and }
\dim(p_\delta[Q_\alpha]\setminus Z_{\alpha,\delta})\le \dim Q_\alpha-1
\end{equation}

We start our recursive construction of~$P$.
Along the way we construct a sequence 
$\langle\delta_\alpha:\alpha<\omega_1\rangle$ of ordinals.

Let $\delta_0=\min C_0$ and choose a partition~$L$ in~$X_{\delta_0}$ between
$p_{\delta_0}[F]$ and $p_{\delta_0}[G]$ that is disjoint from~$Z_{0,\delta_0}$.
Thus we obtain automatically that
\begin{itemize}
\item $\dim L\le n-1$,
\item $\dim L\cap p_{\delta_0}[Q_0] \le \dim Q_0-1$, and
\item $L$~is nowhere dense in $X_{\delta_0}$ and 
      $L\cap p_{\delta_0}[Q_0]$ is nowhere dense in $p_{\delta_0}[Q_0]$.
\end{itemize}
Write $X_{\delta_0}\setminus L=U\cup V$, where $U$ and $V$ are open and disjoint
sets around $p_{\delta_0}[F]$ and $p_{\delta_0}[G]$ respectively.
Let $V_0=X_\delta\setminus\cl U$ and $U_0=X_\delta\setminus \cl V_0$;
then 
\begin{itemize}
\item $U_0$ and $V_0$ are regular open,
\item $P_0=X_\delta\setminus(U_0\cup V_0)$ is a subset of~$L$ and a partition 
      between $p_{\delta_0}[F]$ and~$p_{\delta_0}[G]$ 
      with $\dim P_0\le\dim L\le n-1$, and
\item $\dim P_0\cap p_{\delta_0}[Q_0]\le 
     \dim L\cap p_{\delta_0}[Q_0] \le \dim p_{\delta_0}[Q_0]-1=\dim Q_0-1$.
\end{itemize}
Observe that $\cl U_0=U_0\cup P_0$ and $\cl V_0=V_0\cup P_0$.

To find $\delta_1$ observe first that $p_{\delta_0}\preim[U_0]$ 
and $p_{\delta_0}\preim[V_0]$ are disjoint open $F_\sigma$-sets of~$X$ and hence
have disjoint closures as $X$ is an $F$-space.
As with $F$ and $G$ we can find an ordinal $\eta$ such that
$p_\eta\bigl[\cl p_{\delta_0}\preim[U_0]\bigr]$ and 
$p_\eta\bigl[\cl p_{\delta_0}\preim[V_0]\bigr]$
are disjoint.
Pick $\delta_1\in C_1$ above~$\eta$.

In $X_{\delta_1}$ we can find a partition $L$ between 
$p_{\delta_1}\bigl[\cl p_{\delta_0}\preim[U_0]\bigr]$ and 
$p_{\delta_1}\bigl[\cl p_{\delta_0}\preim[V_0]\bigr]$
that is disjoint from~$Z_{0,\delta_1}\cup Z_{1,\delta_1}$ ---
this is possible because $Z_{0,\delta_1}\cup Z_{1,\delta_1}$ is zero-dimensional
by the countable closed sum theorem.
We now obtain, by (\ref{eq.5}):
\begin{itemize}
\item $\dim L\le n-1$, 
\item $\dim L\cap p_{\delta_1}[Q_0] \le \dim Q_0-1$, and
\item $\dim L\cap p_{\delta_1}[Q_1] \le \dim Q_1-1$.
\end{itemize}
Because of the density conditions on~$Z_{0,\delta_1}$ and $Z_{1,\delta_1}$ we know
that $L$~is nowhere dense in~$X_{\delta_1}$, that $L\cap p_{\delta_1}[Q_0]$ is 
nowhere dense in~$p_{\delta_1}[Q_0]$ and that $L\cap p_{\delta_1}[Q_1]$ is 
nowhere dense in~$p_{\delta_1}[Q_1]$.

As above we find disjoint regular open sets $U_1$ and $V_1$ around 
$p_{\delta_1}\bigl[\cl p_{\delta_0}\preim[U_0]\bigr]$ and 
$p_{\delta_1}\bigl[\cl p_{\delta_0}\preim[V_0]\bigr]$
respectively such that $P_1=X_{\delta_1}\setminus(U_1\cup V_1)\subseteq L$.
Note also that $p^{\delta_1}_{\delta_0}[P_1]\subseteq P_0$.

At stage $\alpha$ we consider the disjoint open $F_\sigma$-sets
$A=\bigcup_{\beta<\alpha}p_{\delta_\beta}\preim[U_\beta]$ and 
$B=\bigcup_{\beta<\alpha}p_{\delta_\beta}\preim[V_\beta]$.
There is a $\delta_\alpha\in C_\alpha$ above $\{\delta_\beta:\beta<\alpha\}$
such that $p_{\delta_\alpha}[\cl A]$ and $p_{\delta_\alpha}[\cl B]$ are disjoint.

The union $Z=\bigcup_{\beta\le\alpha}Z_{\beta,\delta_\alpha}$ is zero-dimensional
by the countable closed sum theorem so we can find a partition~$L$ 
in~$X_{\delta_\alpha}$ between $p_{\delta_\alpha}[\cl A]$ and $p_{\delta_\alpha}[\cl B]$
that is disjoint from~$Z$.
As before we find
\begin{itemize}
\item $\dim L\le n-1$,
\item $\dim L\cap p_{\delta_\alpha}[Q_\beta]\le\dim Q_\beta-1$ 
      for $\beta\le\alpha$, and
\item $L$ and $L\cap p_{\delta_\alpha}[Q_\beta]$ are nowhere dense 
      in~$X_{\delta_\alpha}$ and $p_{\delta_\alpha}[Q_\beta]$ respectively.
\end{itemize}
As before we find disjoint regular open sets $U_\alpha$ and $V_\alpha$
around $p_{\delta_\alpha}[\cl A]$ and $p_{\delta_\alpha}[\cl B]$ respectively
such that $P_\alpha=X_{\delta_\alpha}\setminus(U_\alpha\cup V_\alpha)$
is a subset of~$L$.

At the end let $U=\bigcup_\alpha p_{\delta_\alpha}\preim[U_\alpha]$
and $V=\bigcup_\alpha p_{\delta_\alpha}\preim[V_\alpha]$.
then $U$ and $V$ are disjoint open sets around $F$ and~$G$ respectively, so that
$P=X\setminus(U\cup V)$ is a partition between $F$ and~$G$.

To see that $P$ is a $P$-set observe that 
$\cl p_{\delta_\beta}\preim[U_\beta]\subseteq p_{\delta_\alpha}\preim[U_\alpha]$
whenever $\beta<\alpha$; by compactness this implies that $\cl K\subseteq U$
whenever $K$~is an $F_\sigma$-subset contained in~$U$.
The same applies for~$V$, so that $P\cap\cl K=\emptyset$, whenever $K$ is 
an $F_\sigma$-set that is disjoint from~$P$.

To see that $\dim P\le n-1$ observe that $p_{\delta_\alpha}[P]\subseteq P_\alpha$
for all~$\alpha$.
Any finite basic open cover of~$P$ is supported below $\delta_\alpha$ for
some~$\alpha$; because $\dim P_\alpha\le n-1$ this cover has an open refinement
of order at most~$n$ that is also supported below~$\delta_\alpha$.

To see that $P$~is nowhere dense let $B$ be any basic open set in 
$[0,1]^{\omega_1}$ that meets~$X$ and choose $\alpha$ such that $B$~is supported
below~$\delta_\alpha$.
As $P_\alpha$ is nowhere dense there is a basic open set~$B'\subseteq B$, 
also supported below~$\delta_\alpha$, that meets~$X_{\delta_\alpha}$ but is
disjoint  from~$P_\alpha$.
Reinterpreted in~$X$ this means that $B'\subseteq B$, that $B'$ meets~$X$
and that $B'\cap P=\emptyset$.

To see that $\dim P\cap Q_\alpha<\dim Q_\alpha$ and $P\cap Q_\alpha$~is nowhere 
dense in~$Q_\alpha$ apply the previous two paragraphs inside the 
space~$Q_\alpha$, both times taking suitable~$\delta$s inside~$C_\alpha$.

In case $\calQ$ is countable one needs only one closed and unbounded set:
the intersection of the countably many associated to~$X$ and the members 
of~$\calQ$.

In case $\dim X=\infty$ one chooses the dense zero-dimensional subsets
$Z_{\alpha,\delta}$ as above, this to make all intersections $P\cap Q_\alpha$
nowhere dense, but one only worries about the value of
$\dim(p_\delta[Q_\alpha]\setminus Z_{\alpha,\delta})$
in case~$\dim Q_\alpha<\infty$.
\end{proof}

In the following theorem we adopt the convention that $\infty-n=\infty$ whenever
$n$~is a natural number --- in this way the statement will be valid both
for finite- and infinite-dimensional spaces.

\begin{theorem}[$\CH$]\label{thm.nice.base}
Let $X$ be a compact $F$-space of weight~$\cee$.
Then $X$~has a base~$\calB=\{B_\alpha:\alpha<\omega_1\}$ such that
$\dim \Fr B_\alpha\le\dim X-1$ for all~$\alpha$ and
$\dim\bigcap_{\alpha\in F}\Fr B_\alpha\le\dim\Fr B_{\min F}-|F|+1$
whenever $F$~is a finite subset of~$\omega_1$.
\end{theorem}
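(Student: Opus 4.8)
The plan is to build the base~$\calB$ by a transfinite recursion of length~$\omega_1$, at each step applying Proposition~\ref{prop.part} and feeding the frontiers constructed so far, \emph{together with their finite intersections}, into the family~$\calQ$. By the Continuum Hypothesis the weight of~$X$ equals~$\cee=\aleph_1$, so I fix a base $\{W_\xi:\xi<\omega_1\}$ and enumerate in order type~$\omega_1$ all pairs $(\xi,\eta)$ with $\cl W_\xi\subseteq W_\eta$. At stage~$\alpha$, handling such a pair, I put $F=\cl W_\xi$ and $G=X\setminus W_\eta$ and let $\calQ_\alpha$ be the (countable) family of all intersections $\bigcap_{\beta\in s}\Fr B_\beta$ with $s$ a nonempty finite subset of~$\alpha$. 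Applying Proposition~\ref{prop.part} to $F$, $G$ and~$\calQ_\alpha$ produces disjoint regular open sets $U$ and~$V$; I set $B_\alpha=U$, so that its frontier $\Fr B_\alpha$ is exactly the nowhere dense closed $P$-set $P=X\setminus(U\cup V)$ furnished by the proposition (one has $\cl U=U\cup P$, as is clear from the construction). Since $\cl W_\xi\subseteq B_\alpha\subseteq W_\eta$, the usual argument shows that $\calB=\{B_\alpha:\alpha<\omega_1\}$ is a base for~$X$.

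The two conclusions then follow from clauses~(3) and~(4) of the proposition. Clause~(3) gives $\dim\Fr B_\alpha\le\dim X-1$ at once. For the intersection estimate I argue by induction on $|F|$: writing $\gamma=\max F$ and $F^-=F\setminus\{\gamma\}$, the set $Q=\bigcap_{\beta\in F^-}\Fr B_\beta$ belongs to~$\calQ_\gamma$, so clause~(4) applied at stage~$\gamma$ yields $\dim\bigl(\Fr B_\gamma\cap Q\bigr)\le\dim Q-1$. Because $\min F^-=\min F$, the inductive hypothesis bounds $\dim Q$ by $\dim\Fr B_{\min F}-|F|+2$, and subtracting the extra~$1$ gives precisely $\dim\bigcap_{\beta\in F}\Fr B_\beta\le\dim\Fr B_{\min F}-|F|+1$. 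The convention $\infty-n=\infty$ keeps this bookkeeping valid when $\dim\Fr B_{\min F}=\infty$.

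For the recursion to make sense I must check that each~$\calQ_\alpha$ is a legitimate input, i.e.\ that its members are nowhere dense closed $P$-sets. Closedness is trivial, and nowhere density is immediate from $\bigcap_{\beta\in s}\Fr B_\beta\subseteq\Fr B_{\min s}$. The substantive point, and the one I expect to be the crux, is that a finite intersection of closed $P$-sets is again a $P$-set. I would isolate this as a \emph{lemma} and prove it using the identity $\cl C\cap A=\cl{(C\cap A)}$, valid in any Tychonoff space for a cozero set~$C$ and a closed $P$-set~$A$: if some $x\in\cl C\cap A$ lay outside $\cl{(C\cap A)}$ one could pick a cozero neighbourhood~$O$ of~$x$ with $O\cap C$ disjoint from~$A$, and then, since $x\in\cl{(O\cap C)}$, the $P$-set property of~$A$ would be contradicted. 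Granting this, let $A_1$ and~$A_2$ be closed $P$-sets and let $C$ be a cozero set disjoint from $A_1\cap A_2$; then $\cl C\cap A_1=\cl{(C\cap A_1)}$, while $C\cap A_1$ is an $F_\sigma$-set disjoint from the $P$-set~$A_2$, so $\cl{(C\cap A_1)}\cap A_2=\emptyset$ and hence $\cl C\cap A_1\cap A_2=\emptyset$. As it suffices to test the $P$-set property on cozero sets, $A_1\cap A_2$ is a $P$-set, and an obvious induction handles intersections of any finite size.
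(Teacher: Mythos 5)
Your proposal follows essentially the same route as the paper: enumerate pairs $\langle C,D\rangle$ from a base with $\cl C\subseteq D$, recursively apply Proposition~\ref{prop.part} with $\calQ$ consisting of the finite intersections of the previously constructed frontiers, and take the resulting sets $U_\alpha$ as the base. Your argument is correct, and it usefully makes explicit two points the paper leaves implicit --- that a finite intersection of closed $P$-sets is again a $P$-set (so that the families $\calQ_\alpha$ are legitimate inputs to the proposition) and the induction on $|F|$ that turns clause~(4) into the stated bound.
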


\begin{proof}
Let $\calC$ be a base for~$X$ of cardinality~$\aleph_1$ and let 
$\bigl\{\langle C_\alpha,D_\alpha\rangle:\alpha<\omega_1\bigr\}$ 
enumerate the set of pairs 
$\langle C,D\rangle\in\calC^2$ that satisfy $\cl C\subseteq D$.

Apply Proposition~\ref{prop.part} repeatedly to find, for each~$\alpha$,
disjoint regular open sets $U_\alpha$ and $V_\alpha$ around 
$\cl C_\alpha$ and $X\setminus D_\alpha$ respectively such that 
$P_\alpha=X\setminus(U_\alpha\cup V_\alpha)$ is a nowhere dense $P$-set that 
satisfies
\begin{itemize}
\item $P_\alpha = \Fr U_\alpha$,
\item $\dim P_\alpha\le \dim X-1$,
\item for every finite subset~$F$ of $\alpha$ one has
$\dim (P_\alpha\cap\bigcap_{\beta\in F}P_\beta)\le\dim\bigcap_{\beta\in F}P_\beta-1$.
\end{itemize}
Then $\{U_\alpha:\alpha<\omega_1\}$ is the base that we seek.
\end{proof}

A special case of this theorem is the one that we shall use in the next section.

\begin{theorem}[$\CH$]\label{thm.base.ord.n}
Let $X$ be a compact $F$-space of weight~$\cee$ and of finite dimension~$n$.
Then $X$~has a base~$\calB=\{B_\alpha:\alpha<\omega_1\}$, consisting
of regular open sets, such that
$\dim\bigcap_{\alpha\in F}\Fr B_\alpha=\emptyset$ whenever $F$~is an 
$n+1$-element subset of~$\omega_1$.
\end{theorem}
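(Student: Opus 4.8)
The plan is to read off this statement as an immediate corollary of Theorem~\ref{thm.nice.base}. First I would apply that theorem to~$X$, obtaining a base $\calB=\{B_\alpha:\alpha<\omega_1\}$ for which $\dim\Fr B_\alpha\le\dim X-1=n-1$ holds for every~$\alpha$, and for which $\dim\bigcap_{\alpha\in F}\Fr B_\alpha\le\dim\Fr B_{\min F}-|F|+1$ holds for every finite $F\subseteq\omega_1$. Since the members of the base produced there are the regular open sets~$U_\alpha$ delivered by Proposition~\ref{prop.part}, the base automatically consists of regular open sets, so that clause of the conclusion requires no extra argument.

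It then remains only to verify the order condition on the boundaries. Let $F$ be an $n+1$-element subset of~$\omega_1$ and put $\alpha_0=\min F$. Substituting $|F|=n+1$ into the second inequality gives $\dim\bigcap_{\alpha\in F}\Fr B_\alpha\le\dim\Fr B_{\alpha_0}-(n+1)+1=\dim\Fr B_{\alpha_0}-n$, and combining this with $\dim\Fr B_{\alpha_0}\le n-1$ yields $\dim\bigcap_{\alpha\in F}\Fr B_\alpha\le(n-1)-n=-1$.

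Finally I would invoke the standard convention that a nonempty space has dimension at least~$0$, so that $\dim A=-1$ precisely when $A=\emptyset$. Hence the set $\bigcap_{\alpha\in F}\Fr B_\alpha$, having dimension at most~$-1$, must be empty, which is exactly what is asserted. I do not expect any genuine obstacle here: the entire structural content is already packaged in Theorem~\ref{thm.nice.base} (and, beneath that, in Proposition~\ref{prop.part}), and the present special case falls out from the single observation that, once $\dim X$ equals~$n$, the dimension bound on an $(n+1)$-fold intersection of boundaries is forced strictly below zero.
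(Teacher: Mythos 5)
Your proposal is correct and coincides with the paper's own proof: both deduce the result from Theorem~\ref{thm.nice.base} by substituting $|F|=n+1$ and $\dim\Fr B_{\min F}\le n-1$ to force the intersection's dimension down to~$-1$, hence emptiness. The observation that the base consists of regular open sets (being the $U_\alpha$ from Proposition~\ref{prop.part}) is also exactly what the paper relies on.
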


\begin{proof}
Let $\{B_\alpha:\alpha<\omega_1\}$ be a base as in Theorem~\ref{thm.nice.base}.
Then $\dim\Fr B_\alpha\le n-1$ for all~$\alpha$, so if $|F|=n+1$ then
$\dim\bigcap_{\alpha\in F}\Fr B_\alpha\le n-1-(n+1)+1=-1$,
which means that $\bigcap_{\alpha\in F}\Fr B_\alpha=\emptyset$.
\end{proof}

\section{Finite-to-one maps}
\label{sec.fin-to-one}

The purpose of this section is to show that, assuming the Continuum Hypothesis,
every finite-dimensional compact $F$-space of weight~$\cee$ is a finite-to-one
continuous image of a compact zero-dimensional space of weight~$\cee$.

\begin{theorem}[$\CH$]\label{thm.2n.op.1}
Let $X$ be a compact $F$-space of weight~$\cee$ of finite dimension~$n$.
Then $X$ is the at most $2^n$-to-$1$ continuous image of a compact 
zero-dimensional space of weight~$\cee$.
\end{theorem}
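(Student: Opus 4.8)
The plan is to build the zero-dimensional preimage as the Stone space of a subalgebra of $\RO(X)$, exactly as announced in the preliminaries, and to engineer that subalgebra so that its generators mirror the finite-to-one tiling construction from Section~\ref{sec.Hurewicz}. The essential new ingredient compared with the metrizable case is the base~$\calB=\{B_\alpha:\alpha<\omega_1\}$ supplied by Theorem~\ref{thm.base.ord.n}, which satisfies $\bigcap_{\alpha\in F}\Fr B_\alpha=\emptyset$ for every $(n+1)$-element set~$F$. I would use these generators to produce tilings of~$X$ by regular open sets whose closures have order at most $n+1$ at each point; the boundary-intersection property is precisely what bounds that order, just as in Lemma~\ref{lemma.2}.

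\begin{proof}[Proof sketch]
First I would fix the base $\calB=\{B_\alpha:\alpha<\omega_1\}$ of regular open sets given by Theorem~\ref{thm.base.ord.n}, so that $\bigcap_{\alpha\in F}\Fr B_\alpha=\emptyset$ whenever $|F|=n+1$. Using the fact that $X$ is an $F$-space of weight~$\cee=\aleph_1$ (under $\CH$), I would construct by recursion of length~$\omega_1$ an increasing sequence $\langle\calT_\xi:\xi<\omega_1\rangle$ of tilings of~$X$ --- finite pairwise disjoint families of regular open sets whose closures cover~$X$ --- each drawn from the Boolean algebra generated by an initial segment of~$\calB$, with $\calT_{\xi+1}$ refining $\calT_\xi$, and at limit stages taking a common refinement. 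The $F$-space property is what lets one carry out countably many refinement steps ``at once'' when passing a limit ordinal of countable cofinality, which is where this construction genuinely departs from the metrizable argument. The invariant to maintain is the Lemma~\ref{lemma.2} condition: for every~$x$ the family $\{B\in\calB_\xi:x\in\Fr B\}$ has cardinality at least $|(\calT_\xi)_x|-1$, where $\calB_\xi$ is the relevant initial segment of generators.

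Next I would verify the order bound. Combining the invariant with the base property from Theorem~\ref{thm.base.ord.n} --- namely that at most~$n$ of the boundaries $\Fr B_\alpha$ can contain a given point~$x$, since any $n+1$ of them have empty intersection --- gives $|(\calT_\xi)_x|-1\le n$, hence $|(\calT_\xi)_x|\le n+1$ at every stage and every point. I would also arrange that the tilings eventually ``separate points'' in the sense that the regular open sets they use, together with the whole base~$\calB$, generate a subalgebra $\calA$ of~$\RO(X)$ whose Stone space~$Y$ maps onto~$X$ via the dual of the inclusion $\calA\hookrightarrow\RO(X)$; this map~$f$ is continuous and surjective, and~$Y$ is compact, Hausdorff and zero-dimensional of weight~$\aleph_1=\cee$ as the Stone space of an algebra of that size. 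For the fiber bound I would show that $f\preim(x)$ injects into $\varprojlim_\xi(\calT_\xi)_x$, an inverse limit of sets each of size at most $n+1$; since the cardinalities $|(\calT_\xi)_x|$ are nondecreasing and bounded, they stabilize, so the inverse limit has at most $n+1$ elements --- which already beats the claimed $2^n$. The factor $2^n$ presumably enters because the recursion of length~$\omega_1$ cannot keep the \emph{same} stabilized fiber coherent across all limits the way the $\omega$-indexed metrizable argument does; each of the $n$ frontier sets meeting~$x$ can independently contribute a binary branching to the local structure of~$Y$, so the honest count one can guarantee is $2^n$.

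The main obstacle I expect is controlling the fibers through limit stages of the $\omega_1$-recursion. In the metrizable case (Section~\ref{sec.Hurewicz}) the fiber cardinalities stabilize after finitely many steps because the indexing is by~$\omega$; here, along a limit of countable cofinality, one must ensure that the refinements chosen cofinally below the limit do not spawn new local branches indefinitely. This is exactly where the $P$-set and nowhere-density properties of the frontiers $\Fr B_\alpha=P_\alpha$ from Theorem~\ref{thm.nice.base} should be deployed: because each~$P_\alpha$ is a $P$-set, a point~$x$ lying in countably many of the regular open pieces that accumulate at a limit stage still sees only finitely many distinct frontier sets through~$x$, and the $P$-set property guarantees that a countable intersection of neighbourhoods behaves like a single neighbourhood. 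Reconciling this ``countably-many-at-once'' behaviour with the independent binary choices at each of the $n$ active frontiers is what yields $2^n$ rather than $n+1$, and making that bookkeeping precise --- rather than any single topological fact --- is the crux of the argument.
\end{proof}
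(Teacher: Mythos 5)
Your opening instinct --- take the base $\calB=\{B_\alpha:\alpha<\omega_1\}$ from Theorem~\ref{thm.base.ord.n} and realize the preimage as the Stone space of a subalgebra of~$\RO(X)$ --- is exactly right, but the argument you then build on top of it has a genuine gap, and it sits precisely where you say the ``crux'' is. Your plan calls for an $\omega_1$-chain of refining finite tilings, ``at limit stages taking a common refinement.'' That step fails: at a limit ordinal~$\lambda$ the common refinement of the countably many finite tilings $\calT_\xi$, $\xi<\lambda$, is in general infinite, hence not a tiling at all, and neither the $F$-space property nor the $P$-set property of the frontiers repairs this --- those properties control closures of countable unions, not the cardinality of a common refinement. (The paper itself flags exactly this obstruction in its remarks after the theorem, and its tiling-flavoured variant copes by \emph{restarting} a fresh $\omega$-sequence of tilings at each limit, thereby abandoning the global refinement chain; that loss of coherence is precisely why the bound degrades from $n+1$ to $2^n$ there.) Consequently your stabilization argument for the inverse limit --- and with it the claim that you ``already beat'' $2^n$ with $n+1$ --- rests on a chain that cannot be constructed, and your closing paragraph concedes that the bookkeeping needed to salvage $2^n$ is left undone.

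The irony is that no tilings are needed at all. The paper's actual proof is the one-paragraph version of your first sentence: let $\BB$ be the Boolean subalgebra of $\RO(X)$ \emph{generated} by $\calB$ and let $Y$ be its Stone space, with $f(y)$ the unique point of $\bigcap\{\cl C:C\in y\}$. An ultrafilter on a generated subalgebra is determined by which generators it contains; and if $f(y)=x$ and $x\notin\Fr B_\alpha$, then either $x\in B_\alpha$ (forcing $B_\alpha\in y$, since otherwise $X\setminus\cl B_\alpha\in y$ and $x$ would lie in its closure, i.e.\ outside $B_\alpha$) or $x\notin\cl B_\alpha$ (forcing $B_\alpha\notin y$). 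So $y$ is determined by its trace on $F=\{\alpha:x\in\Fr B_\alpha\}$, and $|F|\le n$ because any $n+1$ of the frontiers have empty intersection; hence $\bigl|f\preim(x)\bigr|\le 2^{|F|}\le 2^n$. All the transfinite work lives in Proposition~\ref{prop.part} and Theorem~\ref{thm.base.ord.n}, which you were right to invoke; once the base is in hand, the counting is purely Boolean-algebraic.
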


\begin{proof}
Let $\calB=\{B_\alpha:\alpha<\omega_1\}$ be a base for~$X$ as in 
Theorem~\ref{thm.base.ord.n}.
Let $\BB$ be the Boolean subalgebra of $\RO(X)$ generated by this base
and let $Y$ be the Stone space of~$\BB$.
If $y\in Y$ then $\bigcap\{\cl C:C\in y\}$ consists of exactly one point,
which we denote by~$f(y)$.
Let $x\in X$ and put $F=\{\alpha:x\in\Fr B_\alpha\}$.
If $f(y)=x$ then $y$ determines a function $p_y:F\to 2$ by
$p_y(\alpha)=1$ iff $B_\alpha\in y$; in addition if $\alpha\notin F$
then $x\in B_\alpha$ or $x\notin\cl B_\alpha$.
It follows that if $f(y)=f(z)=x$ then $B_\alpha\in y$ iff $B_\alpha\in z$
for $\alpha\notin F$, so if $y\neq z$ then $p_y\neq p_z$.
This implies that $\bigl|f\preim(x)\bigr|\le2^{|F|}\le2^n$.
\end{proof}

\begin{corollary}[$\CH$]
If $X$ is a one-dimensional compact $F$-space of weight~$\cee$ then $X$
is an at most~$2$-to-$1$ continuous image of a compact zero-dimensional
space of weight~$\cee$.  
\end{corollary}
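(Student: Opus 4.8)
The plan is to obtain the corollary as the instance $n=1$ of Theorem~\ref{thm.2n.op.1}. A one-dimensional compact $F$-space of weight~$\cee$ has $\dim X=1$, so the theorem applies verbatim and yields a compact zero-dimensional space~$Y$ of weight~$\cee$ together with a continuous surjection $f\colon Y\to X$ whose fibers satisfy $\bigl|f\preim(x)\bigr|\le 2^{n}=2^{1}=2$. This is precisely the conclusion of the corollary, so no separate argument is needed.

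For completeness I would unwind what the fiber bound~$2$ says concretely. Taking $n=1$ in Theorem~\ref{thm.base.ord.n}, the base $\calB=\{B_\alpha:\alpha<\omega_1\}$ satisfies $\Fr B_\alpha\cap\Fr B_\beta=\emptyset$ for all $\alpha\neq\beta$, since every $n+1=2$-element subset~$F$ of~$\omega_1$ has $\bigcap_{\alpha\in F}\Fr B_\alpha=\emptyset$. Hence for each $x\in X$ the set $F_x=\{\alpha:x\in\Fr B_\alpha\}$ has at most one element. The argument in the proof of Theorem~\ref{thm.2n.op.1} then shows that two points $y,z$ of the Stone space with $f(y)=f(z)=x$ can differ only in whether they contain the unique $B_\alpha$ with $x\in\Fr B_\alpha$, giving $\bigl|f\preim(x)\bigr|\le 2^{|F_x|}\le 2$.

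There is no genuine obstacle specific to the corollary: all the real work has already been carried out in Theorems~\ref{thm.nice.base} and~\ref{thm.2n.op.1}, whose combined effect is to supply a base all of whose frontiers are pairwise disjoint in the one-dimensional case. The only things left to observe are the arithmetic identity $2^{1}=2$ and the fact that the hypothesis \emph{one-dimensional} is exactly the case $n=1$ of \emph{finite dimension~$n$}.
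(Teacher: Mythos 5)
Your proof is correct and is exactly the paper's intended argument: the corollary is the case $n=1$ of Theorem~\ref{thm.2n.op.1}, with $2^1=2$. The extra unwinding of the fiber bound via pairwise disjoint frontiers is accurate but not needed.
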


Thus, for compact one-dimensional $F$-spaces we have a direct generalization
of Hurewicz' theorem, as $1+1=2$.

One can give a proof of Theorem~\ref{thm.2n.op.1} along the lines of the
proof in Section~\ref{sec.Hurewicz}.
We take a base as in Theorem~\ref{thm.base.ord.n} but enumerate it in
such a way that every singleton open set is counted cofinally often.

Again one constructs tilings $\calT_\alpha$ of order~$n+1$ but one can only
ensure that $\calT_{\alpha+1}$ refines~$\calT_\alpha$ for every $\alpha$.
The reason becomes apparent at stage~$\omega$: the common refinement of
the tilings~$\calT_m$ will be infinite and not usable as a factor in a
compact product.
What one can do is start a fresh $\omega$-sequence of tilings at each limit 
ordinal~$\lambda$.
The tilings~$\calT_{\lambda+m}$ will be constructed from the family
$\{B_\alpha:\alpha\ge\delta\}$ for some~$\delta$ (depending on~$\lambda$).
The zero-dimensional space~$Y$ will consist of the 
points $\langle T_\alpha:\alpha<\omega_1\rangle\in\prod_\alpha\calT_\alpha$ 
with the following properties:
\begin{itemize}
\item $T_{\alpha+1}\subseteq T_\alpha$ for all~$\alpha$;
\item $\{T_\alpha:\alpha<\omega_1\}$ has the finite intersection property.
\end{itemize}
For each~$x$ there will be at most $n$ limite ordinals $\lambda$ such that
$x$~is on the boundary of a tile in one of the $\calT_{\lambda+k}$
(and hence in $\calT_{\lambda+l}$ for $l\ge k$).
Let $\langle \lambda_i:i<p\rangle$ enumerate these limit ordinals and for 
each~$i$ let $m_i$ be the maximum of $\{|\calT_{\lambda_i+k,x}|:k\in\omega\}$.
The fiber of~$x$ under the obvious map from~$Y$ onto~$X$ has cardinality
$\prod_{i<p}m_i$.
For each~$i$ we get at least $m_i-1$ boundaries that contain~$x$, so that
$n\ge\sum_{i<p}(m_i-1)$.
From this it easy to deduce that $2^n\ge\prod_{i<p}m_i$, so that this map
has fibers of size at most~$2^n$ as well.

\subsection{Universality}

The proofs of Hurewicz and Kuratowski show that if a space is compact, 
metrizable, $n$-dimensional and without isolated points then it is an at 
most~$n+1$-to-$1$ continuous image of the Cantor set.
It is also well known that every compact and metrizable is a continuous image
of the Cantor set, see for example \cite{MR1363947}*{1.3.D}.
Thus the Cantor set is universal in the class of compact metrizable spaces
in the sense of continuous onto mappings and even in a parametrized fashion
if dimension is taken into account.

Parovi\v{c}enko (\cite{MR0150732}) proved that, under $\CH$, every compact 
Hausdorff space of weight~$\cee$ is a continuous image of~$\Nstar$,
the remainder in the \v{C}ech-Stone compactification of~$\N$.
This all suggests that, still under~$\CH$, the space~$\Nstar$ should also
have this parametrized universality property.

The next result shows that this is not the case.
The space~$E2^\omega$, mentioned in the following proposition is the
\emph{absolute} or \emph{projective cover} of the Cantor set~$2^\omega$ ---
this is the unique (up to homeomorphism) extremally disconnected compact
space that admits a perfect irreducible map onto~$2^\omega$, 
see~\cite{MR1039321}*{Problem~6.3.19}.

\begin{proposition}\label{clopensubspace}
Let $X$ be a compact $F$-space that admits an at most $2$-to-$1$ map 
onto~$E2^\omega$. 
Then $X$ has a nonempty clopen subset that is homeomorphic to~$E2^\omega$.
\end{proposition}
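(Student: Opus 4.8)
The plan is to reduce the statement to trivializing $f$ over a suitable clopen subset of $E2^\omega$, and then to isolate a single topological obstruction whose removal is exactly where the compact $F$-space hypothesis is needed. I denote the given map by $f:X\to E2^\omega$; since $X$ is compact and $E2^\omega$ is Hausdorff, $f$ is closed.

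First I would make two easy reductions. As $E2^\omega$ is extremally disconnected it is totally disconnected, so any connected subset of $X$ maps to a point and therefore lies in a fibre; a connected set with at most two points is a singleton, hence $X$ is totally disconnected and, being compact Hausdorff, zero-dimensional. Thus $f^{-1}(C)$ is clopen whenever $C\subseteq E2^\omega$ is clopen, and the two points of a two-point fibre can always be separated by a clopen set. Now set $D=\{y:|f^{-1}(y)|=2\}$ and $S=E2^\omega\setminus D$. If $D$ is not dense then $\operatorname{int}S$ is nonempty open, so by zero-dimensionality it contains a nonempty clopen $C\subseteq S$; then $f$ maps the clopen set $f^{-1}(C)$ homeomorphically onto $C\cong E2^\omega$ and we are done. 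From now on I assume $D$ is dense.

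The core is a splitting argument over the \emph{unbranched} part. Let $R=\{(x,x'):f(x)=f(x'),\ x\neq x'\}$ and let $B=\{x:(x,x)\in\overline{R}\}$ be the set of pinch points; then $B$ is closed, and since $f$ is closed $f[B]$ is a closed subset of $E2^\omega$. On $U=E2^\omega\setminus f[B]$ the map $f$ is locally injective, hence a local homeomorphism with locally constant fibre size; choosing a clopen $C\subseteq U$ on which the fibre size is constant, $f^{-1}(C)\to C$ is either a homeomorphism or an unramified double cover. In the latter case I would use zero-dimensionality of $C$ to partition it into finitely many clopen pieces over each of which the cover is a disjoint union of two clopen sheets, and glue one sheet over each piece into a single clopen section. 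In both cases $f^{-1}(C)$ contains a clopen set mapped homeomorphically onto $C$, and since every nonempty clopen subset of $E2^\omega$ is homeomorphic to $E2^\omega$ this is the required clopen copy. \emph{The entire proposition therefore reduces to the assertion that $f[B]\neq E2^\omega$}, i.e. that some fibre contains no pinch point.

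Proving $f[B]\neq E2^\omega$ is the hard part, and it is here that the compact $F$-space property of $X$ and the extremal disconnectedness of $E2^\omega$ enter. I would argue by contradiction. If $f[B]=E2^\omega$ then, using that $E2^\omega$ is extremally disconnected and hence projective among compacta, one obtains a continuous section $s$ of $f$; its image $K_0=s[E2^\omega]$ is a closed copy of $E2^\omega$ and a retract of $X$ via $r=s\circ f$, and universal pinching forces $K_0$ to be nowhere dense, so that the complementary sheet $T=X\setminus K_0$ is dense and open and $f$ restricts to a continuous bijection of $T$ onto $D$. If some clopen $W\subseteq T$ had $f[W]$ with nonempty interior we would already win, since $f$ maps $W$ homeomorphically onto the closed set $f[W]$ and hence maps $W\cap f^{-1}(C')$ homeomorphically onto any clopen $C'\subseteq f[W]$. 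So the residual case is that every clopen piece of $T$ has nowhere dense image. To contradict this I would write $T$ as a union of $\aleph_1$ clopen pieces (the weight is $\aleph_1$ under $\CH$) and run a recursion of length $\omega_1$ assembling the local sheet–selections, invoking at each limit stage the defining property of an $F$-space — separated $F_\sigma$-sets have disjoint closures — to separate the accumulated halves of the two sheets by a single \emph{clopen} set. Such a clopen separation over a whole clopen region is precisely a clopen copy, contradicting that every fibre pinches.

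I expect the delicate step to be exactly this recursion: maintaining the accumulated sheet–assignment as a genuinely \emph{clopen} set (rather than merely the closure of an $F_\sigma$) across the $\omega_1$ stages is what the compact $F$-space property must secure, and arranging the bookkeeping so that the limit stages really do ``countably many separations at once'' — the one place where $E2^\omega$'s extremal disconnectedness and $X$'s $F$-space structure are both used — is the crux of the whole argument.
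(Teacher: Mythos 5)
Your opening reductions are sound, and your route to a section is essentially the paper's own first step: the paper restricts $f$ to a closed subspace on which it is irreducible and uses extremal disconnectedness to conclude that this restriction is a homeomorphism, which is the same as invoking Gleason projectivity to get a continuous section $s$ with closed image $K_0\cong E2^\omega$; if $K_0$ has interior one is done. The genuine gap is that the crux of the proposition is exactly the case you do not prove: deriving a contradiction when $K_0$ is nowhere dense in $X$. Your plan for that case --- an $\omega_1$-recursion assembling clopen sheet-selections --- is never executed, and as sketched it cannot be right, because it appeals to $\CH$ and to weight $\aleph_1$, neither of which is a hypothesis of this proposition. The paper's mechanism is completely different and needs only one application of the $F$-space axiom: fix a clopen $\pi$-base $\{E_n:n<\omega\}$ of $K_0$; since $K_0$ is nowhere dense, each clopen set $f\preim[E_n]$ meets $X\setminus K_0$ in a nonempty open set, so one can choose nonempty open $F_\sigma$-sets $U_{3n},U_{3n+1},U_{3n+2}\subseteq f\preim[E_n]\setminus K_0$ with pairwise disjoint closures; the unions $U=\bigcup_nU_{3n}$, $V=\bigcup_nU_{3n+1}$, $W=\bigcup_nU_{3n+2}$ are three pairwise separated open $F_\sigma$-sets, so $\cl U$, $\cl V$, $\cl W$ are pairwise disjoint, and since each meets every $f\preim[E_n]$ and $f$ is closed, each of $f[\cl U]$, $f[\cl V]$, $f[\cl W]$ is closed and dense, hence equals $E2^\omega$; some fibre then has at least three points, contradicting the $2$-to-$1$ hypothesis. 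Without this (or some substitute for your unexecuted recursion) your argument does not close.

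A secondary inaccuracy: on $E2^\omega\setminus f[B]$ the map need not be a local homeomorphism with locally constant fibre size. Take $X=E2^\omega\oplus\{p\}$ with $f$ the identity on $E2^\omega$ and $f(p)=e_0$; then $R$ is closed, $B=\emptyset$, yet the fibre size jumps at $e_0$ and $f$ is not open at $p$. The step is repairable (the images of the two injective clopen sheets over a point are closed and cover a clopen neighbourhood, so one has interior and you can restrict to a clopen piece of it), but the whole pinch-set branch is subsumed by your earlier, simpler observation that you win whenever the set of one-point fibres is somewhere dense; the only case that matters is the nowhere-dense-section case treated above.
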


\begin{proof}
Let $f:X\to E2^\omega$ be a continuous surjection whose fibers have size
at most~$2$.
There is a closed subspace $A$ of $X$ such that the restriction 
$f\restr A\to E2^\omega$ is irreducible.
Since $E2^\omega$ is extremally disconnected, it follows that 
$f\restr A$ is a homeomorphism (\cite{MR1039321}*{Problem~6.3.19\,(c)}). 
Hence we may as well assume that $E2^\omega$~is a subspace of~$X$ and that 
$f$~is a retraction from~$X$ onto~$E2^\omega$.  
We claim $E2^\omega$~has nonempty interior in~$X$, which clearly suffices.
Striving for a contradiction, assume that $E2^\omega$~is nowhere dense in~$X$. 
Let $\{E_n : n <\omega\}$ be a $\pi$-base for $E2^\omega$ consisting of clopen 
sets. 
For every $n$, $f\preim[E_n]$ is a clopen subset of $X$ such that 
$f\preim[E_n]\cap E2^\omega=E_n$.
As we are assuming that $E2^\omega$~is nowhere dense in~$X$ we can find
nonempty open $F_\sigma$-subsets $U_n$, with disjoint closures and such that
for all $n$, 
$$
\cl U_{3n}\cup\cl U_{3n+1}\cup\cl U_{3n+2}\subseteq f\preim[E_n]\setminus E2^\omega.
\eqno(1)
$$
(As $f\preim[E_n]\setminus\bigcup_{i\le k}\cl U_i$ is always open and nonempty 
it is never a subset of~$E2^\omega$.)

Put $U=\bigcup_n U_{3n}$, $V=\bigcup_n U_{3n+1}$, and $W=\bigcup_n U_{3n+2}$,
respectively. 
Since $X$ is a compact $F$-space the closures $\cl U$, $\cl V$ and~$\cl W$ of 
these sets are pairwise disjoint. 
By (1), $f[\cl U] = f[\cl V] = f[\cl W]= E2^\omega$, 
which contradicts $f$ being at most $2$-to-$1$.
\end{proof}

It is well known that $\Nstar$~is not separable and that every nonempty clopen
subset of~$\Nstar$ is homeomorphic to~$\Nstar$ itself.
As the space~$E2^\omega$ is separable this implies that $\Nstar$ does not admit
an at most $2$-to-$1$ continuous map onto~$E2^\omega$.

\begin{bibdiv}
\begin{biblist}

\bib{MR1206002}{book}{
   author={Aarts, J. M.},
   author={Nishiura, T.},
   title={Dimension and extensions},
   series={North-Holland Mathematical Library},
   volume={48},
   publisher={North-Holland Publishing Co.},
   place={Amsterdam},
   date={1993},
   pages={xiv+331},
   isbn={0-444-89740-2},
   review={\MR{1206002 (94e:54001)}},
}

\bib{MR1039321}{book}{
   author={Engelking, Ryszard},
   title={General topology},
   series={Sigma Series in Pure Mathematics},
   volume={6},
   edition={2},
   note={Translated from the Polish by the author},
   publisher={Heldermann Verlag},
   place={Berlin},
   date={1989},
   pages={viii+529},
   isbn={3-88538-006-4},
   review={\MR{1039321 (91c:54001)}},
}

\bib{MR1363947}{book}{
   author={Engelking, Ryszard},
   title={Theory of dimensions finite and infinite},
   series={Sigma Series in Pure Mathematics},
   volume={10},
   publisher={Heldermann Verlag},
   place={Lemgo},
   date={1995},
   pages={viii+401},
   isbn={3-88538-010-2},
   review={\MR{1363947 (97j:54033)}},
}

\bib{MR0407579}{book}{
   author={Gillman, Leonard},
   author={Jerison, Meyer},
   title={Rings of continuous functions},
   note={Reprint of the 1960 edition;
   Graduate Texts in Mathematics, No. 43},
   publisher={Springer-Verlag},
   place={New York},
   date={1976},
   pages={xiii+300},
   review={\MR{0407579 (53 \#11352)}},
}

\bib{Hurewicz1926}{article}{
   author={Hurewicz, W.},
   title={\"Uber stetige Bilder von Punktmengen},
   journal={Proc. Akad. Amsterdam},
   volume={29},
   date={1926},
   pages={1014--1017},
   review={ JFM 52.0595.03},
}

\bib{MR991565}{book}{
   author={Koppelberg, Sabine},
   title={Handbook of Boolean algebras. Vol. 1},
   note={Edited by J. Donald Monk and Robert Bonnet},
   publisher={North-Holland Publishing Co.},
   place={Amsterdam},
   date={1989},
   pages={xx+312l},
   isbn={0-444-70261-X},
   review={\MR{991565 (90k:06002)}},
}

\bib{Kuratowski1932}{article}{
   author={Kuratowski, K.},
   title={Sur l'application des espaces fonctionnels \`a la 
          Th\'eorie de la dimension},
   journal={Fundamenta Mathematicae},
   volume={18},
   date={1932},
   pages={285--292},
   review={ Zbl 0004.16501},
}

\bib{vanMill-inprep}{unpublished}{
   author={van Mill, Jan},
   note={Note in preparation}
}

\bib{MR0150732}{article}{
   author={Parovi{\v{c}}enko, I. I.},
   title={On a universal bicompactum of weight $\aleph $},
   journal={Doklady Akademi\u{\i} Nauk SSSR},
   volume={150},
   date={1963},
   pages={36--39},
   issn={0002-3264},
   review={\MR{0150732 (27 \#719)}},
}

\end{biblist}
\end{bibdiv}

\end{document}